
 \documentclass[preprint,12pt]{elsarticle}

\usepackage{amsmath,amsthm,amssymb, epsf,
  amsfonts}
\newtheorem{theorem}{Theorem} \newtheorem{remark}{Remark}
 \newcommand{\bH}{\mathbf H}\newcommand{\bm}{\mathbf
  m}\newcommand{\bx}{\mathbf x}\newcommand{\by}{\mathbf
  y}\newcommand{\bw}{\mathbf w}\newcommand{\bz}{\mathbf
  z}\newcommand{\bn}{\mathbf n} \newcommand{\bu}{\mathbf u}
 \newcommand{\buep}{\mathbf
  u_{\varepsilon}} \newcommand{\ep}{\varepsilon}
\newcommand{\omC}{\Omega\setminus\cup\calP_i^\ep}
\newcommand{\bU}{\mathbf U}

 \newcommand{\calP}{\mathcal P}\newtheorem{lemma}{Lemma}  \bibliographystyle{unsrt} 
\begin{document}

\begin{frontmatter}

\title{Effective models for nematic liquid
  crystals composites with ferromagnetic inclusions} 
\author{M.~C.~Calderer} \ead{mcc@math.umn.edu}\address{School of Mathematics, University of Minnesota, 507 Church Street S.E. 
Minneapolis, MN 55455}
\author{A.~DeSimone}\ead{desimone@sissa.it}\address{SISSA-International School for Advanced Studies
Via Beirut 2-4, 34014, Trieste, Italy}
\author{D.~Golovaty}\ead{dmitry@uakron.edu} \address{Department of Mathematics, The University of Akron, Akron,OH
  44325-4002} 
\author{A.~Panchenko}\ead{panchenko@math.wsu.edu}\address{Department of Mathematics, Washington State University, Pullman, WA 99164
}

\begin{abstract}
  Molecules of a nematic liquid crystal respond to an applied magnetic field by reorienting themselves in the direction of the field.  Since the dielectric anisotropy of a nematic is small, it takes relatively large fields to elicit a significant liquid crystal response.  The interaction may be enhanced in colloidal suspensions of ferromagnetic particles in a liquid crystalline matrix---ferronematics--- as proposed by Brochard and de Gennes in 1970. The ability of these particles to align with the field and, simultaneously, cause reorientation of the nematic molecules, greatly increases the magnetic response of the mixture. Essentially the particles provide an easy axis of magnetization that interacts with the liquid crystal via surface anchoring.

  We derive an expression for the effective energy of ferronematic in the dilute limit, that is, when the number of particles tends to infinity while their total volume fraction tends to zero. The total energy of the mixture is assumed to be the sum of the bulk elastic liquid crystal contribution, the anchoring energy of the liquid crystal on the surfaces of the particles, and the magnetic energy of interaction between the particles and the applied magnetic field.  The homogenized limiting ferronematic energy is obtained rigorously using a variational approach. It generalizes formal expressions previously reported in a physical literature.

\end{abstract}
\begin{keyword}
Ferronematics, liquid crystal, homogenization
\end{keyword}
\end{frontmatter}

\section{Introduction}

The study of magnetic particle suspensions in a liquid crystalline matrix was initiated with the theoretical article by Brochard and de~Gennes \cite{BdG70} (July, 1970), and the experimental work carried out by Rault, Cladis and Burger \cite{RCB70}, (June, 1970). {\footnote{Both groups acknowledge an ongoing scientific communication while their works were underway.}}.  The underlying mechanism behind a ferronematic system is a mechanical coupling between the nematic molecules and the magnetic particles, mostly realized by the surface anchoring energy.


Molecules of nematic liquid crystals have positive magnetic susceptibility, so they  tend to align themselves in the direction of an applied magnetic field. However, since this magnetic susceptibility is small---of order $10^{-7}$---it takes large fields, about $10^4$ Oe, to elicit a significant response.   Brochard and de~Gennes argue that the addition of paramagnetic ions to the system is not an efficient way to increase the  magnetic susceptibility constant, since it would require a concentration of paramagnetic ions  above $n=10^{20}$ ions per $\textrm{cm}^3$. The latter is the limiting  value that cannot be exceeded in order to guarantee the preservation of the liquid crystal properties of the system.

The focus of research turned to suspensions of large ferromagnetic particles in the nematic matrix. Brochard and de~Gennes identified the two key properties  of such systems:  strength of the mechanical coupling and stability of the suspension.  The former guarantees that the effect of the magnetic field on the liquid crystal, acted through the magnetic particles, is ability to control the nematic texture. The latter property sets a limit on the size and concentration of particles to prevent clustering. The numbers arrived at from theoretical considerations set the particle length  $l>0.5\times 10^{-2}\mu$m,  and a ratio $\frac{l}{d}\approx 10$, where $d$ denotes the diameter of the particle. The theoretical prediction on particle volume fraction was not to overcome the value $f= 10^{-3}$.  

In their experiments, Rault, Cladis and Burger \cite{RCB70} chose mono-domain particles of $\gamma{\textrm Fe}_2{\textrm O}\frac{2}{3}$, of $ 0.35 \mu\textrm{m}$ long (l) by $0.04$ $ \mu\textrm{m}$ in diameter (d). The saturation magnetization is $ 384 $ gauss with the easy axis parallel to the long axis of the grain.
Grains of these dimensions satisfy the criterion for mechanical coupling to the nematic liquid and also for mechanical rotation, as opposed to rotation of magnetization inside the grain, in a reversed field. Typical grain concentrations were of the order of $2\times 10^{11}$ grains/${\textrm cm}^3$, which corresponds to $f\approx 1.4\times 10^{-4}$, well within the theoretical prediction by Brochard and de Gennes.  For this physical parameters, Rault, Cladis and Burger state \cite{RCB70}: {\sl The ferronematic appeared to be very stable in the nematic-isotropic phases showing very little tendency to agglomerate. However, if a high field (1 kg) is applied to the sample in the isotropic phase, upon returning it to the nematic phase, we have observed long chains of grains about 50$ \mu\textrm{m}$.}  Both works assert that distortions of the nematic pattern in magnetic suspensions occur at very low fields: magnetizations range in the order of 0.1 to 1 gauss, instead of the values $ 10^{-4}$ to $10^{-3}$ of pure nematic liquid crystals, with a typical coupling gain of order $10^{3}$.

Central to the understanding of the nematic-magnetic coupling is the question of how the grains align in the nematic. Brochard and de~Gennes postulated strong anchoring of  nematic molecules along the magnetic moment, assumed to coincide with the direction of the particle axis. 
The effect of the grain magnetic field results from the anisotropy of the field around the grain, present even in the case of a spherical grain, resulting in a preferential direction for the magnetic moment in the nematic phase. This effect turns out to be small for small grains, with the magnetic moment causing only a local disruption of the nematic alignment.


In their experimental work, Chen and Amer [\cite{Chen-Amer83}, 1983] used particle coating that yields homeotropic anchoring of the liquid crystal on the magnetic grain to synthesize stable ferronematic systems. Although the length and aspect ratio of the particles, $0.5$ $\mu\textrm{m}$ and $7:1$, respectively, are compatible with those considered in the previous works, the earlier theory assuming rigid parallel anchoring was found to be not applicable to the homeotropic case.  The question of the surface anchoring and its implication on the relative orientation of $\bm$ and $\bn$ gave rise to an intense experimental and theoretical research activity spanning over three decades. In (\cite{raikher86}, \cite{burylov1-90}, \cite{burylov2-90}), the authors showed that the rigid anchoring approximation, $\bm||\bn$ might be used only if the condition $\frac{Wd}{K}>>1$ holds, where $W$ represents the surface energy density, and $K$ denotes a typical Frank constant.  A calculation for MBBA data, with $K= 5\times 10^{-7} \frac{\textrm{dyn}}{{\textrm cm}^2}, $ and $10^{-3}<W<10^{-2}$, and $d=0.07 \mu\textrm{m}$ gives $10^{-2}<\frac{Wd}{K}<10^{-1}$, showing a finite surface energy of the system.

Assuming soft liquid crystal surface anchoring, Burylov and Raikher (\cite{burylov-reiker95}, 1995) proposed a macroscopic free energy density of the form \begin{eqnarray}
  F=&& \frac{1}{2}\{K_1({ \textrm div}\,\bn)^2 + K_2({\textrm curl}\,\bn\cdot \bn)^2+ K_3(\bn\times{\textrm curl}\,\bn)^2\}-\frac{1}{2}\chi_a(\bn\cdot\bH)^2\nonumber\\
  && -M_s f\left(\bm\cdot\bH\right) +\left(\frac{f K_bT}{\nu}\right)\ln f+\left(\frac{A Wf}{d}\right)(\bn\cdot\bm)^2. \label{burylov-energy} \end{eqnarray}
Here $f$ represents the volume fraction of the particles, $\chi_a$ the anisotropic part of the diamagnetic susceptibility of nematic,  and the positive constants $\nu$ and  $M_s$ denote the particle volume and the saturation magnetization, respectively. In the last term,
$W$ represents the strength of the surface energy and $A=1-3\cos^2\alpha$ characterizes the type of anchoring, with  $\alpha$ denoting  the easy-angle orientation of the nematic on the particle surface. 

The macroscopic free energy (\ref{burylov-energy}) has been investigated in theoretical and experimental works involving orientational transitions in ferronematic states \cite{burylov2-90}, \cite{raiker98}, \cite{bena2003}. In particular, \cite{bena2003} presents a nonlinear modification of the Rapini-Papoular energy that predicts a first order Fredericks transition.  In {\cite {Kopcansky2001}} and \cite{Kopcansky99}, Kopcansky et al. use the modified theory to determine threshold fields in ferronematic transitions under combined electric and magnetic fields. In \cite{kopcansky2005}, the authors report on experimental studies of structural transitions in ferronematic subject to electric and magnetic field, with the matrix consisting of 8CB and 6CHBT liquid crystals, respectively. While in both cases the anchoring was determined as soft, it was found that $\bn\perp\bm $ in the first case, and $\bn\|\bm$ in the second. So, it was established then that both, parallel and perpendicular anchoring may occur depending on the properties of the matrix (which, in turn, reflects the properties of the particle coating).  Zadorozhnii et al. \cite{Z2006} provide a comprehensive analysis of the director---a unit vector in the direction of the preferred molecular alignment---switching for small and large values of the applied field in a nematic liquid crystal cell subject to homeotropic boundary conditions at the cell and particle walls. They show that the threshold field depends on the anchoring strength of the director on the particle surface. 

Note that a closely related set of models \cite{selinger1}-\cite{selinger2} exists for suspensions of ferroelectric nanoparticles in a nematic liquid crystalline matrix. The mechanical coupling between the particles and the nematic is still governed by the surface anchoring, but the particles interact with an electric and not a magnetic field.

In this work, we rigorously derive an expression for the effective ferronematic energy that reduces to the models described above under appropriate limits.   We consider a collection of spheroidal particles with, fixed, randomly distributed locations in the matrix, and with magnetic moment pointing in the direction of an easy axis.  The particles are taken as rotations and translations of the same spheroidal particle, located at the origin. We model the liquid crystalline matrix according to Ericksen's theory of nematics with variable degree of orientation. In this theory, the state of a liquid crystal is described by a vector $\bu(\bx)$ whose direction gives the average molecular alignment at the point $\bx$, while its magnitude $|\bu(\bx)|$ --- degree of orientation --- describes the quality of the alignment. Assuming that the Frank elastic constants are equal, the bulk liquid crystal energy has the form of the Ginzburg-Landau energy for $\bu$. We assume soft anchoring of the liquid crystal molecules on the surfaces of ferromagnetic particles as represented by the Rapini-Papoular energy term. The surface energy contribution can be either positive or negative depending on whether parallel or perpendicular alignment of nematic molecules is preferred on particles surfaces. It turns out that the case when the surface energy is negative is the most challenging to analyze.

Mathematically, we consider a family of energy functionals, $\mathcal F_\epsilon$, parametrized by a quantity $\epsilon>0$ that characterizes the geometry of the system, specifically, the size of the particles and the inter-particle distance. The system is assumed to be dilute, that is the volume fraction of the particles tends to $0$ in the limit $\epsilon\to 0$. The parameter scalings of the model that give the relative contribution of the different components of the energy are formulated in terms of $\epsilon$ as well. The choice of scalings guarantees that the limiting contributions of the bulk and surface energies, as well as the energy of interaction between the particles and the applied magnetic field are of order $O(1)$. We show that for the same parametric regime the contribution from the energy of magnetic interaction between the particles is $o(1)$ in $\epsilon$. This is consistent with the experimental observations that characterize dilute small particles systems in the absence of clustering.  

We study the variational limit of the family of energies $\{\mathcal F_{\epsilon}\}$ as $\epsilon\to 0$.  The limiting functional $\{\mathcal F_{0}\}$ represents the effective, or homogenized energy of the system.  Here the convergence is understood in the sense that the sequence of minimizers $\{u_{\epsilon}\}$ of $\{\mathcal F_{\epsilon}\}$ converges to a minimizer $u$ of $\{\mathcal F_{0}\}$ in an appropriate functional space.  The effective energy provides a benchmark for comparison with the formal expression for ferronematic energy functional \cite{burylov-reiker95} given in (\ref{burylov-energy}).

The homogenized energy (\ref{eq:limmagfun}) is more general than (\ref{burylov-energy}) as it is obtained under less restrictive assumptions. The interaction between the liquid crystal and the particles is due to surface anchoring and is represented by the matrix $A$ in (\ref{asstensor}) that encodes the information on  the shape and size of the particles, their locations, and their orientation with respect to a fixed frame.  Likewise, the effective magnetic moment $\bf M$ in (\ref{asstensor}) that couples the particles to the external magnetic field depends on the spatial and orientational distributions of the particles. For the high-aspect-ratio, needle-like particles the coupling terms reduce to their counterparts in (\ref{burylov-energy}).

\bigskip

\section{Background}
Given the domain $\Omega\subset{\mathbf{R}}^3$ let $P_i\subset\Omega$ be an arbitrary collection of subsets of $\Omega$ such that $P_i\cap P_j=\emptyset$ for every $i\neq j$ where $i,j=1,\ldots,n$. Suppose that the region $\Omega\backslash\cup_{i}P_i$ is occupied by a nematic liquid crystal and that for each $i=1,\ldots,n$ the region $P_i$ corresponds to a hard ferromagnetic particle embedded in the nematic matrix.

We will consider the liquid crystal configurations that can be described by the Ericksen's theory for nematics with variable degree of orientation; we will neglect all flow effects and assume that all elastic constants are equal. Further, we will use the phenomenological Rapini-Papoular term in order to approximate the liquid crystal/ferromagnetic surface energy. Then the elastic energy of the liquid crystal is given by
\[\mathcal{F}^{el}_{lc}:=\int_{\Omega\backslash\cup_{i}P_i}\left(K\,|\nabla{\bf{u}}|^2+W(|{\bf{u}}|)\right)\,dV+q\int_{\cup_{i}\partial
  P_i}{\left({\bf{u}},\nu\right)}^2\,d\sigma\,,\]
where $K>0$ is the elastic constant, $q\in\mathbf{R}$ is the strength of the surface term, $W$ is the bulk free energy of the undistorted state, and $\nu$ is the outward unit normal vector to $\partial P_i$.

Suppose that ferromagnetic particles are sufficiently small so that for every $i=1\,,\ldots\,N$ an $i-$th particle can be characterized by a magnetization vector ${\bf{m}}_i$ pointing in the direction of an easy axis of the particle. In order to derive the expression for the magnetostatic contribution $f^m$ to the free energy density of what is effectively a diamagnetic matrix interspersed with the ferromagnetic particles, we follow \cite{Landau-Lifshitz-ECM}. We have that
\begin{equation}
  \label{mfe1}
  \left(\frac{\partial f^m}{\partial {\bf H}}\right)_T=-\bf{B}\,,
\end{equation}
where ${\bf H}$ and ${\bf B}$ are the magnetic field and the magnetic induction, respectively (cf. eq. (39.1) in \cite{Landau-Lifshitz-ECM}) and the derivative is taken holding the temperature $T$ fixed. Assuming that ${\bf M}$ denotes the magnetic moment, the induction is given by \begin{equation}
  \label{mfe2}
  {\bf B}=\mu_0({\bf H}+{\bf M})\,,
\end{equation}
where $\mu_0$ is the magnetic permeability of vacuum.

Suppose that the magnetic moment of the material can be written as
\begin{equation}
  \label{mfe3}
  {\bf M}={\bf m}+\chi{\bf H}\,,
\end{equation}
and the material can exhibit both the spontaneous magnetization ${\bf
  m}$ (an independent thermodynamic variable) and the magnetization induced by the field (we assume it to be proportional to the field).  The tensor $\chi$ is the magnetic susceptibility; it is generally small in diamagnetics, but it can be large in soft ferromagnetic bodies. In what follows, we will set ${\bf m}={\bf 0}$ in the liquid crystal while we will set $\chi=0$ in hard ferromagnetics. 

Substituting (\ref{mfe2}) and (\ref{mfe3}) into (\ref{mfe1}) and integrating with respect to the field, we obtain
\begin{equation}
  \label{mfe4}
  f^m({\bf m},{\bf H})=f^m({\bf m},{\bf 0})-\mu_0({\bf m},{\bf H})-\frac{\mu({\bf H},{\bf H})}{2}
\end{equation}
Here $\mu=\mu_0({\bf I}+\chi)$ is the magnetic permeability tensor.  Note that the energy $f^m({\bf m},{\bf 0})$ accounts for both the exchange and anisotropy energies for a ferromagnetic body. We will ignore this splitting since we consider single-domain particles.

The expression (\ref{mfe4}) can be adjusted further by excluding the energy of the external field that would otherwise be created by the same sources in vacuum.

Let the fields ${\bf H}$ and ${\bf h}$ solve the (different) sets of Maxwell's equations under the same boundary conditions at infinity in the presence and in the absence of the material, respectively. Then ${\bf h}$ is the magnetic field in vacuum when there is no magnetizing body (cf. eq. (32.1) in \cite{Landau-Lifshitz-ECM}).

Since the free energy of the field ${\bf h}$ is
\[\mathcal{F}^m_{\bf h}:=-\int_{\mathbf{R}^3}\frac{\mu_0{|{\bf
      h}|}^2}{2}\,dV\,,\]
the adjusted free energy can be written as
\begin{equation}
  \label{mfe5}
  \tilde {\mathcal{F}}^m:=\int_{\mathbf{R}^3}f^m\,dV-\mathcal{F}^m_{\bf h}=\int_{\mathbf{R}^3}\left(f^m+\frac{\mu_0{|{\bf h}|}^2}{2}\right)\,dV\,.
\end{equation}
By rearranging terms, using Maxwell's equations, and integrating,
one can show \cite{Landau-Lifshitz-ECM} that
\begin{equation}
  \label{mfe6}
  \tilde {\mathcal{F}}^m=\int_{\mathbf{R}^3}\left(f^m+\frac12({\bf H},{\bf B})-\frac{\mu_0}{2}({\bf M},{\bf h})\right)\,dV\,.
\end{equation}
This equation can be simplified by taking (\ref{mfe4}) into account to obtain
\begin{equation}
  \label{mfe7}
  \mathcal{F}^m=-\frac{\mu_0}{2}\int_{\mathbf{R}^3}\left(({\bf m},{\bf H})+({\bf m},{\bf h})+\chi({\bf H},{\bf h})\right)\,dV\,,
\end{equation}
where we dropped the tilde for convenience.

In a hard ferromagnetic material, the magnetic susceptibility
$\chi=0$. By denoting the demagnetizing field by ${\bf H}_i={\bf
  H}-${\bf h} the equation (\ref{mfe7}) reduces to
\[\mathcal{F}^m=-\frac{\mu_0}{2}\int_{\mathbf{R}^3}\left(({\bf
    m},{\bf H}_i)+2({\bf m},{\bf h})\right)\,dV\,,\]
---this is the sum of the magnetostatic and the Zeeman energies.  Further, ${\bf H}_i$ vanishes as $x\to\infty$ and it satisfies the same set of Maxwell's equations as ${\bf H}$.

If the material is diamagnetic, then ${\bf m}={\bf 0}$ and $\chi$ is small enough so that the magnetic field is essentially unperturbed by the presence of magnetizing body. We conclude that
\[\mathcal{F}^m=-\frac{\mu_0}{2}\int_{\mathbf{R}^3}\chi({\bf
  h},{\bf h})\,dV\,,\]
which is the standard form of the free energy for the diamagnetic bodies.

Now we establish the expressions for the magnetic free energy in various components of the composite. Suppose for now that the external field ${\bf h}$ is constant.

Using the same notation as above, the energy of interaction between the magnetic field and the (diamagnetic) liquid crystal (cf.  \cite{virga}, \cite{mottram}) is given by
\[\mathcal{F}^{m}_{lc}:=-\frac{\mu_0}{2}\int_{\Omega\backslash\cup_{i}P_i}\chi_{lc}
({\bf H},{\bf h})\,dV\,.\]
The magnetic susceptibility tensor $\chi_{lc}$ can be approximated as
\[\chi_{lc}=\frac{\chi_a|\bf{u}|}{s_{exp}}\left(\frac{\bf{u}}{|\bf{u}|}\otimes\frac{\bf{u}}{|\bf{u}|}-\frac{1}{3}{\bf{I}}\right)+\bar\chi\,{\bf{I}}\,.\]
Here $\chi_a=\chi_\parallel-\chi_\perp$ is the rescaled diamagnetic anisotropy and $$\bar\chi=\left(\chi_\parallel+2\chi_\perp\right)/3,$$ is the average susceptibility. The scaling factor $s_{exp}$ is the value of the uniaxial order parameter $|\bf{u}|$ when the measurements of the susceptibility were taken, and it reflects the hysteresis behavior of the magnetic loading experiments.  We point out that in a nematic $\chi_\parallel\,,\chi_\perp<0$ and $0<\chi_a\ll|\bar\chi|$ \cite{degennes}. The smallness of $\chi_a/\bar\chi$ is the basis for assuming that the effect of the liquid crystal on the magnetic field is weak \cite{ virga}.

By setting $\chi=0$ in (\ref{mfe7}), the free energy of the hard ferromagnetic particles is
\[\mathcal{F}^m_f:=-\frac{\mu_0}{2}\sum_{i=1}^{N}\int_{P_i}\left\{({\bf
    m}_i,{\bf H})+({\bf m}_i,{\bf h})\right\}\,dV\,.\]
By solving the Maxwell's equations of magnetostatics, we find that the total field ${\bf H}$ is given by
\[{\bf H}=-\nabla\phi\,,\]
where the magnetic potential satisfies the  equations
\begin{equation*}
\begin{split}
\Delta\phi&=0, \quad \textrm{ in }\, \cup P_i,\\
\mathrm{div}\left(\mu_{lc}\nabla\phi\right)&=0, \quad \textrm{in }\, {\left\{\cup P_i\right\}}^c.
\end{split}
\end{equation*}
 The boundary conditions are
\[\left.\left[-\mu\frac{\partial\phi}{\partial\nu}+({\bf{m}}_i,\nu)\right]\right|_{\partial
  P_i}=0\,,\] for every $i=1,\ldots,N$ and
\[\nabla\phi={\bf h}\,,\]\,
at infinity. Here the magnetic permeability tensor $\mu=\mu_{lc}=\mu_0\left({\bf I}+\chi_{lc}\right)$ in the liquid crystal and $\mu=\mu_0{\bf I}$ in the ferromagnetic particles.

The equilibrium configuration of the composite can be found by
minimizing the functional
\[\mathcal{F}:=\mathcal{F}_{lc}^{el}+\mathcal{F}_{lc}^m+\mathcal{F}^m_f\,,\]
with respect to ${\bf{u}}$ and ${\bf{m}}_i$.

\section {Formulation of the problem}
Suppose that the positions and orientations of prolate spheroidal particles are fixed and distributed randomly in the matrix, the spontaneous magnetic moments of the ferromagnetic particles are parallel to their long axes, and $\chi_{a}=0$.

Consider the family of energy functionals ${\mathcal F}_{\ep}$ 
\begin{equation}
  \label{energy}
  \begin{split}
    \mathcal F_\ep[\bu]=&\int_{\Omega\setminus\cup{\calP}_i^\ep}\big\{|\nabla \bu|^2+ W(|\bu|)\big\}\,dV + g_{\ep}\int_{\cup\partial {\calP_i^\ep}}(\bu,\mathbf\nu)^2\,d\sigma \\
    -&\int_{{\mathbf R}^3}\left\{(\bm_\ep,\mathbf H_\ep) +
      2(\bm_\ep,\mathbf h_\ep)\right\}\,dV,
  \end{split} \end{equation} 
where $\ep>0$ is a small parameter related to the geometry of the system. Here \begin{equation}
  \left\{
    \begin{array}{ll}
      \bm_\ep=\bm^\ep_i, &\bx\in\calP_i^\ep \\
      0, &\bx\in\Omega\setminus\cup\calP_i^\ep.
    \end{array}
  \right.
\end{equation}
and for simplicity, we set $W(t)={\left(1-t^2\right)}^2$. The magnetic
field is given by
\begin{equation}
  h_{\ep}=\left|{\bf h}_\ep\right|=\textrm {constant},\quad  \bH_\ep=-\nabla\varphi,
\end{equation}
with
\begin{equation}
\label{mag_energy}
  \left\{
    \begin{array}{ll}
      \triangle\varphi=0, & x\in{\mathbf R}^3,\\
      \left.\left[-\mu_0\frac{\partial\varphi}{\partial\nu}+(\bm_i^\ep,\nu)\right]\right|_{\partial\calP_i}=0, & x\in\partial\calP_i^\ep.
    \end{array}
  \right.
\end{equation}
We assume that for a prescribed $\mathbf U\in C^1(\Omega, {\mathbf
  R}^3)$, \begin{equation}{\buep}=\mathbf U, \quad \textrm {on}\,\,
  \partial\Omega.\end{equation} 
For each $\ep>0$, we denote by $\buep$ the minimizer of (\ref{energy}).  We study the limiting energy and the behavior of minimizers of ${\mathcal
  F}_{\ep}$ as $\ep\to 0$.

We make the following assumptions:
\begin{enumerate}
\item The ferromagnetic particles consist of a family of $N_\ep$ prolate
  spheroids $\calP_i^\ep=\bx_i^\ep+ \ep^\alpha R_i^\ep\calP, \, i=1,
  ...N_\ep$, where $\bx_i^\ep\in{\mathbf R}^3$ denotes a particle
  center and $\calP$ is a reference spheroid with the long axis
  parallel to the $z$-coordinate axis, and $R_i^\ep$ is a rotation.
\item Given positive numbers $0<d<D$, the distance between particles
  $|\bx_i^\ep-\bx_j^\ep|\in[d\ep, D\ep]$, for all, $0<i,j,\leq N_\ep$.
  Thus $N_\ep<N\ep^{-3}$ for some $N>0$ uniformly in $\ep$.
\item
  $\left|\bm_i^{\ep}\right|=m_\ep={\mathrm{Vol}}(\calP_i^\ep)m\ep^{\beta_1}$,
  ${\bf h}_\ep={\bf h}\ep^{\beta_2}$, and $ g_{\ep}=g\ep^{\gamma}, $ where $m$,
  ${\bf h}$, and $g$ are given constants.
\item The parameters $\alpha, \beta_1, \beta_2,$ and $\gamma$ satisfy
  \begin{equation}
    \label{asspars}
    \begin{split}
    1<\alpha<2, &\quad 6\alpha+2\beta_1>9,\\ \beta_2+\beta_1=3-6\alpha, &\quad \gamma=3-2\alpha.
  \end{split}
  \end{equation}
\item The matrix-valued functions
\begin{equation}
\label{asstensor}
\begin{split}
  A^\ep\left(\bx\right)&=\ep^3g\sum_i\delta\left(\bx-\bx_i^\ep\right)R_i^\ep\left(\int_{\partial\calP}\nu\otimes\nu\,d\sigma\right) {R_i^\ep}^T, \\
  &{\bf M}^\ep\left(\bx\right)={\ep^3m\mathrm
    {Vol}}^2(\calP)\sum_i\delta\left(\bx-\bx_i^\ep\right)R_i^\ep \hat{\bf z},
\end{split}
\end{equation} 
converge in the sense of distributions to $A,{\bf M}\in L^\infty(\Omega)$, respectively, where $A:\mathbb R^3\to M^{3\times 3}$ and ${\bf M}:\mathbb R^3\to \mathbb R^3$. Here $\hat{\bf z}$ is a unit vector in the direction of $z$-axis.
\end{enumerate}
\noindent {\bf Remark.\,} Note that the total volume of the particles
satisfies $\mathrm{Vol}\left(\cup
  \calP_i^\ep\right)=O\left(\ep^{3(\alpha-1)}\right)$, so that the homogenization problem for (\ref{energy}) corresponds to a dilute limit when $\lim \mathrm{Vol}\left(\cup\calP_i^\ep\right)\longrightarrow 0$ as $\ep\to 0$. The scalings on $\xi_{\ep}$ and $g_{\ep}$ guarantee that the magnetic interaction between the applied field and the particles, the Ginzburg-Landau energy, and the surface energy are all $O(1)$ while the magnetic interactions between the particles are of order $o(1)$ and, therefore, can be neglected.

Our principal goal is to prove the following
\begin{theorem}
  \label{t_main}
Suppose that the assumptions 1-5 hold. Then the sequence of minimizers $\left\{\bu_\epsilon\right\}_{\epsilon>0}$ of the functionals $\mathcal F_\epsilon[\bu_\epsilon]$ converges in the sense of (\ref{convergence}) to a minimizer of the functional
\begin{equation}
  \label{eq:limmagfun}
  \mathcal F _0[\bu]=\int_{\Omega}\left[{\left|\nabla\bu\right|}^2+{\left(1-\left|\bu\right|^2\right)}^2+\left(A\bu,\bu\right)-2({\bf h},{\bf M})\right]\,dV,
\end{equation}
where $A$ and $\bf M$ are as defined in assumption 5.
\end{theorem}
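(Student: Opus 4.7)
The plan is a variational (\(\Gamma\)-convergence-type) argument: establish compactness for any sequence \(\{\bu_\ep\}\) with \(\mathcal F_\ep[\bu_\ep]\) uniformly bounded, prove the liminf inequality \(\liminf \mathcal F_\ep[\bu_\ep]\ge \mathcal F_0[\bu]\) whenever \(\bu_\ep\rightharpoonup \bu\), and construct a recovery sequence achieving \(\limsup \mathcal F_\ep[\tilde\bu_\ep]\le \mathcal F_0[\bu]\). Together with the boundary condition \(\buep = \mathbf U\) on \(\partial\Omega\), this yields convergence of the minimizers.

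For compactness, the chief subtlety is that \(g\) may be negative, so the surface term cannot be discarded from below. I would first rescale each \(\calP_i^\ep\) to the reference \(\calP\) and apply a trace inequality to obtain
\[
\Bigl|g_\ep\int_{\cup\partial\calP_i^\ep}(\bu,\nu)^2\,d\sigma\Bigr|\le C\ep^{\gamma+2\alpha-3}\int_{\Omega}\bigl(|\bu|^2+\ep^{2\alpha}|\nabla\bu|^2\bigr)\,dV.
\]
Since \(\gamma+2\alpha=3\), the right-hand side is an \(L^2\)-norm perturbed by a vanishing multiple of the Dirichlet energy, and is absorbable into \(\int|\nabla\bu|^2\). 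The Zeeman contribution is \(O(1)\) by the scalings \(\beta_2+\beta_1=3-6\alpha\) and \(N_\ep=O(\ep^{-3})\), while the self- and pair-interaction pieces arising from the decomposition \(\varphi = -\mathbf h_\ep\cdot\bx + \psi_\ep\) of the magnetic potential are \(o(1)\) thanks to \(6\alpha+2\beta_1>9\). These bounds give a uniform \(H^1(\omC)\) estimate; a standard extension operator across the well-separated particles then delivers \(\bu_\ep\rightharpoonup \bu\) in \(H^1(\Omega)\), with strong convergence in \(L^p\) for every \(p<6\).

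For the liminf, the Dirichlet part is weakly lower semicontinuous on \(\Omega\), and the potential \((1-|\bu|^2)^2\) converges by strong \(L^4\) convergence together with \(\mathrm{Vol}(\cup\calP_i^\ep)\to 0\). The surface term is the crux: freezing \(\bu_\ep\) at \(\bx_i^\ep\) on each particle, its leading contribution is the distributional pairing of \(A^\ep\) with \(\bu\otimes\bu\), which by assumption 5 tends to \(\int_\Omega(A\bu,\bu)\,dV\); the fluctuation \(\bu_\ep-\bu(\bx_i^\ep)\) is handled by a Poincar\'e estimate on each \(\ep\)-cell together with the strong \(L^4\) convergence, yielding
\[
g_\ep\sum_i\int_{\partial\calP_i^\ep}(\bu_\ep-\bu(\bx_i^\ep),\nu)^2\,d\sigma\longrightarrow 0.
\]
The Zeeman part satisfies \(-2\int(\bm_\ep,\mathbf h_\ep)\,dV = -2(\mathbf h,\int\mathbf M^\ep\,dV)\to -2\int(\mathbf h,\mathbf M)\,dV\) by assumption 5, and \(-\int(\bm_\ep,\mathbf H_\ep)\,dV\) is \(o(1)\) by the interaction bound used in compactness.

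The recovery sequence is the easy direction: take \(\tilde\bu_\ep=\bu\) (smoothed if needed) on \(\omC\) and redo the same three computations; since the particles have vanishing volume the bulk and potential parts tend at once to their \(\mathcal F_0\) values, and the surface and magnetic parts attain the same limits computed above but now with equality. The principal obstacle I expect is precisely the liminf estimate for the surface term in the case \(g<0\), where the fluctuation error cannot be discarded as non-negative. A sharp quantitative Poincar\'e-trace inequality on the rescaled particle is required, with the dependence on \(\ep\) tracked through the scalings of assumption 4, so that the fluctuation genuinely vanishes and does not cancel the leading \((A\bu,\bu)\) contribution.
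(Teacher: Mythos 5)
Your overall plan (compactness, liminf, recovery) is reasonable, and your recovery step is essentially sound: for a smooth competitor the freezing error on each particle boundary is $O(\ep^{\alpha})$, so the plain (smoothed) function works, whereas the paper builds correctors from the local cell problems (\ref{ui}) mainly because its lower bound is not a true $\Gamma$-liminf but a comparison argument that exploits minimality. The genuine gap is in your compactness step: the displayed trace inequality is false. Rescaling each $\calP_i^\ep$ to the reference spheroid gives, per particle, $\int_{\partial\calP_i^\ep}|\bu|^2\,d\sigma\le C\ep^{-\alpha}\int_{B_{C\ep^\alpha}(\bx_i^\ep)}\bigl(|\bu|^2+\ep^{2\alpha}|\nabla\bu|^2\bigr)\,dV$ (surface measure scales like $\ep^{2\alpha}$, volume like $\ep^{3\alpha}$), so after multiplying by $|g_\ep|=|g|\ep^{3-2\alpha}$ and summing the prefactor is $\ep^{3(1-\alpha)}\to\infty$, not $\ep^{\gamma+2\alpha-3}=1$. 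Indeed the inequality you wrote cannot hold: take $\bu$ equal to a fixed unit vector on each $\partial\calP_i^\ep$, supported in balls of radius $2\ep^{\alpha}$ with $|\nabla\bu|\sim\ep^{-\alpha}$; then the left-hand side is of order $|g_\ep|N_\ep\ep^{2\alpha}\sim 1$ while your right-hand side is of order $N_\ep\ep^{3\alpha}=\ep^{3(\alpha-1)}\to 0$. The correct estimate must use the security shell of thickness comparable to the interparticle distance $\ep$ (not the particle size $\ep^\alpha$); this is exactly Lemma \ref{l1}/Lemma \ref{l2} of the paper, which yields $|g_\ep|\sum_i\int_{\partial\calP_i^\ep}(\bu,\nu)^2\,d\sigma\le C\bigl(\ep\|\nabla\bu\|^2_{L^2(\omC)}+\|\bu\|^2_{L^2(\omC)}\bigr)$. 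Even with this bound, the $L^2$ term carries an $O(1)$ coefficient and is \emph{not} absorbable into the Dirichlet energy; it has to be controlled through the quartic potential $W$ via H\"older/Young, as in the proof of Theorem \ref{t1}.

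The same issue propagates into your liminf. With the correct shell estimate, the fluctuation term is bounded by $C\bigl(\ep^{3-\alpha}\|\nabla\bu_\ep\|^2_{L^2(\omC)}+\|\bu_\ep-\bu(\bx_i^\ep)\|^2_{L^2}\bigr)$, and the second piece does not vanish for fixed $\bu\in H^1(\Omega)$: pointwise values $\bu(\bx_i^\ep)$ are not even defined, and assumption 5 gives convergence of $A^\ep$ only against continuous test functions, so $\bu\otimes\bu$ cannot be paired with $A^\ep$ directly. You must interpose a smooth approximation $\bu_\delta$ and take the double limit $\lim_{\delta\to 0}\limsup_{\ep\to 0}$ (the fluctuation then tends to $C\|\bu-\bu_\delta\|^2_{L^2}$, vanishing only as $\delta\to 0$); the weak lower semicontinuity of the Dirichlet term on the perforated domains needs the same smoothing (convexity plus the fact that $\|\nabla\bu_\delta\|_{L^2(\cup_i\calP_i^\ep)}\to 0$, since the extension's gradient inside the particles need not be negligible). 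Once these repairs are made your route becomes a legitimate alternative—indeed a genuine $\Gamma$-liminf for arbitrary bounded-energy sequences, which is stronger than the paper's minimality-based comparison with the corrector fields $\bu_\delta+\bz^\ep_\delta$—but as written the key quantitative inequality underpinning both compactness and the surface-term limit is incorrect, and the single-limit vanishing claim for the fluctuation does not hold.
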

The matrix $A$ and the vector $\bf M$ that appear in the statement of Theorem \ref{t_main} describe the homogenized liquid crystal/ferromagnetic particles interaction and the effective magnetization density, respectively.

\section{Main Results}
We prove Theorem \ref{t_main} in several steps as outlined below.

\subsection{Liquid Crystal Energy} First, we consider the energy (\ref{energy}) without the magnetic terms, that is 
\begin{equation}
  \label{lc-energy}
  \mathcal E_\ep[\bu]= \int_{\Omega\setminus\cup{\calP}_i^{\ep}}\big\{|\nabla \bu|^2+ W(|\bu|)\big\}\,dV + g_{\ep}\int_{\cup\partial {\calP_i^{\ep}}}(\bu,\mathbf\nu)^2\,d\sigma.
\end{equation}
For each small $\ep>0$, we let $\buep$ be the minimizer of
(\ref{lc-energy}) subject to the Dirichlet boundary condition $\buep=
\bU$ on $\partial \Omega$.

We want to find the limiting functional of the family $\mathcal E_{\ep}$ as $\epsilon\to 0$. Although our approach is developed for the prolate spheroidal particles, it can be easily extended to particles of arbitrary convex shapes. The method is based on the procedure developed in \cite{BerKhrus} for the case of spheres.

\subsubsection{ Compactness} We first observe that the restriction of $\mathbf U$ to the domain $\Omega_{\ep}= \Omega\setminus\cup \calP_i^{\ep} $ is an admissible function. Indeed, \begin{equation}
  \begin{split}
    \mathcal E_\ep[\bU]=&\int_{\Omega\setminus\cup{\calP}_i^{\ep}}\big\{|\nabla \bU|^2+ W(|\bU|)\big\}\,dV + g_{\ep}\int_{\cup\partial {\calP_i^{\ep}}}(\bU,\mathbf\nu)\,d\sigma\\ \leq &\int_{\Omega}\big\{|\nabla \bU|^2+ W(|\bU|)\big\}\,dV + g_{\ep}\int_{\cup\partial {\calP_i^{\ep}}}(\bU,\mathbf\nu)\,d\sigma\\
    \leq & C\left(1 + g_{\ep}
      N_{\ep}|\partial\calP|\ep^{2\alpha}\right)\leq
    C\left(1+gN|\partial\calP|\right)\leq C,
  \end{split}
\end{equation}
where $C$ is a generic positive constant.  Consequently,
\begin{equation}
  \label {uniform-boundedness}
  \mathcal E_\ep[\buep]\leq\mathcal E_\ep[\bU]\leq C. 
\end{equation}
That is, $\mathcal E_\ep[\buep]$ is uniformly bounded in $\ep$.

The following lemma is needed towards the proof of compactness of the
sequence $\{\buep\}$ of energy minimizers of (\ref{lc-energy}).
\begin{lemma}
  \label{l1}
  Let $\calP$ denote a prolate spheroid in $\mathbf R^3$ with minor
  and major axes $A$ and $B$, respectively. Let
  $\hat\calP\supset\calP$ represent the prolate spheroid homothetic to
  $\calP$ with axes $\frac{\hat A}{A}=\frac{\hat B}{B}>2$.  Then
  \begin{equation}
    \label{lemma1}
    \begin{split}
      \int_{\partial \calP} |\bu|^2\,d\sigma & \leq
      \frac{3B^2(1+\lambda)}{A}\int_{\hat\calP\setminus\calP}|\nabla
      \bu|^2\,dV \\ &+
      \left(1+\frac{1}{\lambda}\right)\frac{24A^2}{{7\hat A}^3}
      \int_{\hat\calP\setminus\calP}|\bu|^2\,dV.
    \end{split}
  \end{equation}  
\end{lemma}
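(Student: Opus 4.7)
The plan is to exploit the homothetic structure shared by $\calP$ and $\hat\calP$. Assume both spheroids are centered at the origin and set $T := \hat A/A = \hat B/B > 2$, so that every $y \in \hat\calP \setminus \calP$ admits a unique representation $y = tx$ with $x \in \partial\calP$ and $t \in [1, T]$. A direct computation of the Jacobian of the map $(t, x) \mapsto tx$ gives
\[
dV_y = t^2 \, (x \cdot \nu(x)) \, dt \, d\sigma(x),
\]
where $\nu$ is the outward unit normal to $\partial\calP$.

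First, for every $x \in \partial\calP$ and $t \in [1, T]$, the fundamental theorem of calculus along the dilation ray $s \mapsto sx$ yields
\[
\bu(x) = \bu(tx) - \int_1^t \nabla \bu(sx) \cdot x \, ds.
\]
Squaring, invoking the elementary inequality $(a - b)^2 \leq (1 + 1/\lambda) a^2 + (1 + \lambda) b^2$, and applying Cauchy--Schwarz to the integral together with the bound $|x|^2 \leq B^2$ on $\partial\calP$, I obtain the pointwise estimate
\[
|\bu(x)|^2 \leq (1 + 1/\lambda)\,|\bu(tx)|^2 + (1 + \lambda)(t - 1)\, B^2 \int_1^T |\nabla \bu(sx)|^2 \, ds.
\]
Note that the assignment of $(1+\lambda)$ to the gradient term and $(1+1/\lambda)$ to the $L^2$ term matches the shape of the asserted bound.

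Next, I integrate this inequality in $t$ against a suitable weight over $[1, T]$ and then in $x$ against $d\sigma$ over $\partial\calP$. Pushing the resulting mixed integrals onto $\hat\calP \setminus \calP$ via the change of variables $y = tx$ converts the right-hand side into volume integrals. The key geometric input is the elementary lower bound $x \cdot \nu(x) \geq A$ on $\partial\calP$ (attained along the equator of the spheroid, since $\nu = \nabla f/|\nabla f|$ for $f(x) = x_1^2/A^2 + x_2^2/A^2 + x_3^2/B^2$). This controls the reciprocal Jacobian $1/(t^2 (x \cdot \nu))$ by $1/A$ from above and, combined with $|x|^2 \leq B^2$, produces a coefficient of the shape $B^2/A$ in front of $\int |\nabla \bu|^2$ and of the shape $A^{-1}$ times a $T$-dependent factor in front of $\int |\bu|^2$.

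The main obstacle will be matching the precise numerical constants $3 B^2/A$ and $24 A^2/(7 \hat A^3)$ in the statement. Uniform averaging in $t$ as sketched above produces constants of the correct geometric form but with looser numerical prefactors; recovering the stated values likely requires either a non-uniform $t$-weight whose $L^1$ and $L^\infty$ norms are optimally balanced against the range $[1, T]$, or a sharper treatment of the reciprocal Jacobian $1/(t^2 (x \cdot \nu))$ over the annular shell --- the factor $\hat A^3$ in the denominator of the second coefficient is the tell-tale sign that the $t^2$ weight is integrated against the full annulus rather than discarded by $t \geq 1$. Modulo this constant-chasing, the inequality is a direct consequence of the FTC-plus-change-of-variables argument outlined above.
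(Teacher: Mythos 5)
Your construction is essentially the paper's own proof in different clothes: the paper introduces stretched spherical coordinates $x=\rho\sin\phi\cos\theta$, $y=\rho\sin\phi\sin\theta$, $z=A^{-1}B\rho\cos\phi$, whose radial lines are exactly your dilation rays $s\mapsto sx$, writes the same fundamental-theorem identity along those rays, applies Young's inequality with the same placement of $(1+\lambda)$ and $(1+1/\lambda)$, and converts to volume integrals through the explicit Jacobians; your (correct) bound $x\cdot\nu\ge A$, attained on the equator, plays the role of its Jacobian comparisons. So the route is the same; the problem is that the step you defer as ``constant-chasing'' is in fact the substance of the lemma's constants, and your primary remedy cannot deliver them.

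Concretely, with the unweighted Cauchy--Schwarz you use, the gradient term carries the factor $(t-1)$. After multiplying by any weight $w(t)\ge0$, integrating in $t\in[1,T]$ with $T=\hat A/A$, and converting via $x\cdot\nu\ge A$, the two coefficients are (up to the common factor $1/A$) $(1+\lambda)B^2\,\frac{\int_1^T w(t)(t-1)\,dt}{\int_1^T w\,dt}$ and $\bigl(1+\tfrac1\lambda\bigr)\frac{\sup_t w(t)/t^2}{\int_1^T w\,dt}$. To get the $\hat A^{-3}$ decay of the second coefficient the weight must put essentially all of its mass at $t$ comparable to $T$ (the optimal choice is $w=t^2$), but then the $w$-average of $(t-1)$ is of order $T$, so the gradient coefficient comes out of order $B^2\hat A/A^2$ rather than $3B^2/A$; concentrating $w$ near $t=1$ has the opposite failure. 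Since in the application $\hat A/A\sim\ep^{1-\alpha}\to\infty$, uniformity in $\hat A$ of the gradient constant is precisely the point of the statement, so no balancing of a $t$-weight alone can close this. The missing idea is the paper's weighted Cauchy--Schwarz \emph{inside the $s$-integral}: $\bigl(\int_1^t\nabla\bu(sx)\cdot x\,ds\bigr)^2\le\int_1^t|\nabla\bu(sx)\cdot x|^2 s^2\,ds\,\int_1^t s^{-2}\,ds\le B^2\int_1^T|\nabla\bu(sx)|^2 s^2\,ds$, which makes the gradient term independent of $t$ and already carries the Jacobian factor $s^2$. Integrating against $t^2\,dt$ and using $x\cdot\nu\ge A$ then gives
\[
\int_{\partial\calP}|\bu|^2\,d\sigma\le\frac{(1+\lambda)B^2}{A}\int_{\hat\calP\setminus\calP}|\nabla\bu|^2\,dV+\Bigl(1+\frac1\lambda\Bigr)\frac{3A^2}{\hat A^3-A^3}\int_{\hat\calP\setminus\calP}|\bu|^2\,dV,
\]
and $\hat A>2A$ yields the stated $24A^2/(7\hat A^3)$; your dilation-ray bound $|\nabla\bu\cdot x|\le B|\nabla\bu|$ even improves the paper's $3B^2/A$ to $B^2/A$. (As a mitigating remark, the weaker inequality your unweighted argument does prove, with gradient constant of order $B^2\hat A/A^2$, would still be usable in Lemma~\ref{l2}, since $g_\ep B^2\hat A/A^2\sim\ep^{4-2\alpha}\to0$ for $\alpha<2$; but it is not the lemma as stated.)
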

\begin{proof}
  Suppose that the center of the spheroid $\calP$ is at the origin and
  its long axis is oriented along $z$-axis. We introduce the
  coordinates
  \[x=\rho\sin{\phi}\cos{\theta},\ y=\rho\sin{\phi}\sin{\theta},\
  z={A}^{-1}{B}\rho\cos{\phi},\] then the volume element is given by
  $dV={A}^{-1}{B}\rho^2\sin{\phi}\,d\rho\,d\theta\,d\phi$ and $\rho=C$
  defines a prolate spheroid with axes $C$ and $BC/A$ with the surface
  area element
  $d\sigma=A^{-1}C^2\sin{\phi}\sqrt{B^2\sin^2{\phi}+A^2\cos^2{\phi}}\,d\theta\,d\phi$.
  We start with the relation
  \begin{equation}
    \label {e1}
    \bu(A,\phi, \theta)= \bu(t, \phi,\theta)-\int_A^t \bu_{\rho}d\,\rho, \quad \textrm {where}\,\, t\in[A,\hat A].
  \end{equation}
  Let $\lambda>0$ be fixed. Taking the square of (\ref{e1}) and
  applying Young's inequality gives
  \begin{equation}
    \label{l1.1}
    \begin{split}
      {|\bu|}^2(A,\phi,\theta)&={|\bu|}^2(t,\phi,\theta)-2\bu(t,\phi,\theta)\cdot\int_A^{t} \bu_{\rho}d\rho+\left|\int_A^t \bu_{\rho}d\rho\right|^2 \\
      &\leq(1+\lambda)\left|\int_A^t \bu_{\rho}d\rho\right|^2+ \left(1+\frac{1}{\lambda}\right){|\bu|}^2(t,\phi,\theta).
    \end{split}
  \end{equation}
  Further, by H\"older's inequality
  \[
  \begin{split}
    \left|\int_A^{t}\bu_{\rho}d\,\rho\right|^2&\leq\int_A^t{\left|\bu_\rho\right|}^2\rho^2d\rho\,\int_A^t\rho^{-2}d\rho \\
    &\leq\frac{1}{A}\int_A^{\hat
      A}{\left|\bu\right|}_\rho^2\rho^2d\rho
  \end{split}
  \]
  We multiply both sides of the inequality (\ref{l1.1}) by the
  determinant of the Jacobian, integrate in $\hat\calP\setminus\calP$,
  and use the fact that
  ${\left|\bu_\rho\right|}^2\leq3B^2A^{-2}{|\nabla \bu|}^2$:
  \begin{equation}
    \label{eq2}
    \begin{split}
      \frac{B}{A}&\int_0^{\pi}\int_0^{2\pi}\int_A^{\hat A}{|\bu|}^2(A,\phi,\theta)\rho^2\sin{\phi}\,d\rho\,d\theta\,d\phi \\
      &\leq\frac{1+\lambda}{A} \frac{{\hat A}^3-A^3}{3}\int_0^{\pi}\int_0^{2\pi}\int_A^{\hat A}{|\bu_\rho|}^2A^{-1}B\rho^2\sin{\phi}\,d\rho\,d\theta\,d\phi \\
      &+\left(1+\frac{1}{\lambda}\right) \int_0^{\pi}\int_0^{2\pi}\int_A^{\hat A}{|\bu|}^2A^{-1}B\rho^2\sin{\phi}\,d\rho\,d\theta\,d\phi \\
      &\leq(1+\lambda)\frac{B^2\left({\hat
            A}^3-A^3\right)}{A^3}\int_{\hat\calP\setminus\calP}{|\nabla
        \bu|}^2\,dV+\left(1+\frac{1}{\lambda}\right)
      \int_{\hat\calP\setminus\calP}{|\bu|}^2\,dV.
    \end{split}
  \end{equation}
  At the same time
  \begin{equation}
    \label{eq3}
    \begin{split}
      \frac{B}{A}&\int_0^{\pi}\int_0^{2\pi}\int_A^{\hat A}{|\bu|}^2(A,\phi,\theta)\rho^2\sin{\phi}\,d\rho\,d\theta\,d\phi \\
      &=\frac{B({\hat A}^3-A^3)}{3A}\int_0^{\pi}\int_0^{2\pi}{|\bu|}^2(A,\phi,\theta)\sin{\phi}\,d\theta\,d\phi \\
      &\geq\frac{{\hat A}^3-A^3}{3A^2}\int_0^{\pi}\int_0^{2\pi}{|\bu|}^2(A,\phi,\theta)a\sin{\phi}\sqrt{B^2\sin^2{\phi}+A^2\cos^2{\phi}}\,d\theta\,d\phi \\
      &=\frac{{\hat
          A}^3-A^3}{3A^2}\int_{\partial\calP}{|\bu|}^2\,d\sigma\,.
    \end{split}
  \end{equation}
  Combining (\ref{eq2}) with (\ref{eq3}) and using the fact that $\hat
  A>2A$, we obtain
  \[
  \begin{split}
    \int_{\partial\calP}&{|\bu|}^2\,d\sigma\leq \frac {3B^2 (1+\lambda)}{A}\int_{\hat\calP\setminus\calP}{|\nabla \bu|}^2\,dV+\left(1+\frac{1}{\lambda}\right) \frac{3A^2}{{\hat A}^3-A^3}\int_{\hat\calP\setminus\calP}{|\bu|}^2\,dV \\
    &\leq \frac {3B^2
      (1+\lambda)}{A}\int_{\hat\calP\setminus\calP}{|\nabla
      \bu|}^2\,dV+\left(1+\frac{1}{\lambda}\right) \frac{24A^2}{7{\hat
        A}^3}\int_{\hat\calP\setminus\calP}{|\bu|}^2\,dV\,.
  \end{split}
  \]
\end{proof}

Next, we use the previous lemma to estimate the surface energy
contribution in (\ref{lc-energy}) in terms of the $L^2-$norms of $\bu$
and $\nabla\bu$.
\begin{lemma}
  \label{l2}
  Let $\ep>0$, $\lambda>0$ be as in Lemma \ref{l1}. Then
  \begin{equation}
    \label{surface-estimate}
    \begin{split}
      g_{\ep}&\int_{\cup\partial\calP_i^\ep}(\bu\cdot\nu)^2\,d\sigma\leq C(1+\lambda)\left[\ep\int_{\omC}|\nabla\bu|^2\,dV\right. \\
      &\left.+\lambda^{-1} \int_{\omC}|\bu|^2\,dV\right]\,,
    \end{split}
  \end{equation}
  for any admissible function $\bu$, where the constant $C$ is independent of $\ep$.
\end{lemma}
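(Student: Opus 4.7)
The strategy is to apply Lemma \ref{l1} separately to each particle $\calP_i^\ep$ and sum the resulting inequalities, exploiting the scale separation between the particle size $\ep^\alpha$ and the inter-particle distance $\ep$. For each $i$, I will choose an enlarged homothetic spheroid $\hat\calP_i^\ep$ whose semi-axes are of order $\ep$ rather than $\ep^\alpha$: since particle centers are separated by at least $d\ep$ and $\alpha>1$, I can fix a constant $c>0$ (depending on $d$ and the aspect ratio of the reference spheroid $\calP$) so that $\hat\calP_i^\ep$ has minor semi-axis $c\ep$ and the family $\{\hat\calP_i^\ep\}$ is pairwise disjoint and contained in $\Omega$ for $\ep$ small. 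The homothety ratio $\hat A/A\sim\ep^{1-\alpha}$ is then much larger than $2$ as required by Lemma \ref{l1}, and the elementary bound $(\bu,\nu)^2\leq|\bu|^2$ reduces the task to estimating $\int_{\partial\calP_i^\ep}|\bu|^2\,d\sigma$.

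Substituting $A\mapsto\ep^\alpha A_0$, $B\mapsto\ep^\alpha B_0$, and $\hat A\mapsto c\ep$ into (\ref{lemma1}) yields, for each $i$,
\begin{equation*}
\int_{\partial\calP_i^\ep}|\bu|^2\,d\sigma \leq C_1(1+\lambda)\,\ep^{\alpha}\!\int_{\hat\calP_i^\ep\setminus\calP_i^\ep}|\nabla\bu|^2\,dV + C_2\bigl(1+\tfrac{1}{\lambda}\bigr)\,\ep^{2\alpha-3}\!\int_{\hat\calP_i^\ep\setminus\calP_i^\ep}|\bu|^2\,dV,
\end{equation*}
with $C_1,C_2$ independent of $\ep$ and $i$. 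Summing over $i$, using that the shells $\hat\calP_i^\ep\setminus\calP_i^\ep$ are pairwise disjoint subsets of $\omC$, and multiplying through by $g_\ep=g\ep^{\gamma}=g\ep^{3-2\alpha}$ gives prefactors $C_1 g(1+\lambda)\,\ep^{3-\alpha}$ and $C_2 g(1+1/\lambda)$ in front of the gradient and $L^2$ integrals, respectively. Because $\alpha<2$ forces $3-\alpha>1$, we have $\ep^{3-\alpha}\leq\ep$ for $\ep\leq 1$, collapsing the first prefactor to $C(1+\lambda)\ep$; the identity $1+1/\lambda=\lambda^{-1}(1+\lambda)$ then packages both terms into the common factor $C(1+\lambda)$, which is precisely (\ref{surface-estimate}).

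The one genuinely delicate point is the geometric construction of the enlarged shells $\hat\calP_i^\ep$ with the three simultaneous requirements of homothety ratio exceeding $2$, pairwise disjointness, and containment in $\Omega$. The first two are immediate consequences of assumption 2 together with $\alpha>1$, but containment in $\Omega$ may fail for particles within distance $O(\ep)$ of $\partial\Omega$. This is handled either by assuming, consistent with the setup, that the centers $\bx_i^\ep$ lie at distance at least $c\ep$ from $\partial\Omega$, or by treating the $O(\ep^{-2})$ boundary particles in a thin layer of volume $O(\ep)$ through a separate trace-type estimate whose contribution is absorbed into the right-hand side. Once this geometric bookkeeping is in place, the estimate reduces to a direct application of Lemma \ref{l1} combined with the scaling relation $\gamma=3-2\alpha$.
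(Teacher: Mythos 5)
Your proposal is correct and follows essentially the same route as the paper: apply Lemma \ref{l1} to each particle with the enlarged spheroid scaled to the inter-particle distance (the paper takes $\hat A=d\ep/2$), use $(\bu,\nu)^2\leq|\bu|^2$, sum over the disjoint shells inside $\omC$, and invoke $g_\ep=g\ep^{3-2\alpha}$ with $1<\alpha<2$ so the gradient prefactor becomes $O(\ep)$ and the $L^2$ prefactor $O(\lambda^{-1}(1+\lambda))$. Your extra remarks on choosing the homothety constant to respect the aspect ratio and on particles near $\partial\Omega$ address bookkeeping the paper leaves implicit, but do not change the argument.
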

\begin{proof}
  Let $C$ denote a generic constant independent of $\ep$. Setting
  $A=\ep^\alpha a$, $B=\ep^\alpha b$, and $\hat A=d\ep/2$, we apply
  Lemma \ref{l1} to the surface integral term
  \[
  \begin{split}
    \int_{\partial\calP_i^\ep}&(\bu\cdot\nu)^2\,d\sigma\leq\int_{\partial\calP_i^\ep}|\bu|^2\,d\sigma\\
    &\leq \ep^\alpha\frac {3b^2 (1+\lambda)}{a}\int_{\hat\calP_i^\ep\setminus\calP_i^\ep}{|\nabla \bu|}^2\,dV+\ep^{2\alpha-3}\left(1+\frac{1}{\lambda}\right) \frac{192a^2}{7d^3}\int_{\hat\calP_i^\ep\setminus\calP_i^\ep}{|\bu|}^2\,dV \\
    &\leq
    C(1+\lambda)\left[\ep^\alpha\int_{\hat\calP_i^\ep\setminus\calP_i^\ep}{|\nabla
        \bu|}^2\,dV+\ep^{2\alpha-3}\lambda^{-1}\int_{\hat\calP_i^\ep\setminus\calP_i^\ep}{|\bu|}^2\,dV\right].
  \end{split}
  \]
  Then, since $g_\ep=g\ep^{3-2\alpha}$ and $1<\alpha<2$, we have
  \begin{equation}
    \label{bdrest3}
    \begin{split}
      g_\ep&\int_{\cup \partial P_i^\ep}(\bu\cdot\nu)^2\,d\sigma=g_\ep\sum_{i=1}^{N_\ep}\int_{\partial\calP_i^\ep}(\bu\cdot\nu)^2\,d\sigma \\
      &\leq C(1+\lambda)\sum_{i=1}^{N_\ep}\left[\ep\int_{\hat\calP_i^\ep\setminus\calP_i^\ep}{|\nabla \bu|}^2\,dV+\lambda^{-1}\int_{\hat\calP_i^\ep\setminus\calP_i^\ep}{|\bu|}^2\,dV\right] \\
      &\leq C(1+\lambda)\left[\ep\int_{\omC}{|\nabla
          \bu|}^2\,dV+\lambda^{-1}\int_{\omC}{|\bu|}^2\,dV\right]\,.
    \end{split}
  \end{equation}
\end{proof}
We are now in the position to prove the following theorem
\begin{theorem}
  \label{t1}
  If a sequence of admissible functions $\left\{\buep\right\}$
  satisfies $\mathcal E_\ep\left[\buep\right]<M$ for some constant
  $M>0$ uniformly in $\ep$, then there exists a constant $\tilde M>0$
  such that
  $\left\|\buep\right\|_{H^1\left(\Omega\backslash\cup_i\calP_i^\ep\right)}<\tilde
  M$ uniformly in $\ep$.
\end{theorem}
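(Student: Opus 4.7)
The plan is to combine the uniform energy bound with Lemma \ref{l2} and the Ginzburg--Landau double-well structure to produce separate bounds on $\int_{\Omega_\ep}|\nabla\buep|^2$ and $\int_{\Omega_\ep}|\buep|^2$, where $\Omega_\ep=\Omega\setminus\cup\calP_i^\ep$. The essential difficulty, as the authors flag in the introduction, is that the surface coefficient $g$ may be negative, so the third term in $\mathcal E_\ep$ cannot simply be discarded: it must be absorbed.

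First, I would rearrange $\mathcal E_\ep[\buep]\le M$ as
\[
\int_{\Omega_\ep}|\nabla\buep|^2 + \int_{\Omega_\ep}(1-|\buep|^2)^2 \le M - g_\ep\int_{\cup\partial\calP_i^\ep}(\buep,\nu)^2\,d\sigma.
\]
Since the right-hand side could be as large as $M+|g_\ep|\int(\buep\cdot\nu)^2\,d\sigma$, I would estimate this surface term using Lemma \ref{l2} (the proof of which controls the \emph{unsigned} integral of $|\buep|^2$ on the particle boundaries, so the sign of $g$ is irrelevant for the bound). This yields
\[
\int_{\Omega_\ep}|\nabla\buep|^2 + \int_{\Omega_\ep}(1-|\buep|^2)^2 \le M + C(1+\lambda)\Bigl[\ep\int_{\Omega_\ep}|\nabla\buep|^2 + \lambda^{-1}\int_{\Omega_\ep}|\buep|^2\Bigr].
\]

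Next, fixing $\lambda=1$ (any convenient choice) and taking $\ep$ small enough that $2C(1+\lambda)\ep\le 1$, the gradient term on the right can be absorbed into the left-hand side, leaving
\[
\tfrac{1}{2}\int_{\Omega_\ep}|\nabla\buep|^2 + \int_{\Omega_\ep}(1-|\buep|^2)^2 \le M + C'\int_{\Omega_\ep}|\buep|^2.
\]
To close this, I would exploit the elementary inequality $(1-t^2)^2\ge \tfrac{1}{2}t^4 - 1$, which gives $\int_{\Omega_\ep}|\buep|^4 \le 2\int_{\Omega_\ep}(1-|\buep|^2)^2 + 2|\Omega|$, and then combine it with Cauchy--Schwarz $\int_{\Omega_\ep}|\buep|^2\le |\Omega|^{1/2}(\int_{\Omega_\ep}|\buep|^4)^{1/2}$ and Young's inequality to bound $C'\int|\buep|^2$ by $\tfrac{1}{2}\int(1-|\buep|^2)^2$ plus a constant. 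The resulting inequality yields uniform bounds on both $\int_{\Omega_\ep}|\nabla\buep|^2$ and $\int_{\Omega_\ep}(1-|\buep|^2)^2$, hence on $\int_{\Omega_\ep}|\buep|^4$, and finally on $\int_{\Omega_\ep}|\buep|^2$, producing the desired $H^1$ bound.

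The main obstacle I anticipate is keeping track of the signs and absorbing constants carefully in the $g<0$ regime: Lemma \ref{l2} produces a factor $\ep$ in front of the gradient term precisely so that it can be absorbed, and this relies crucially on the scaling $g_\ep=g\ep^{3-2\alpha}$ together with $\alpha>1$. Once this absorption is secured, the remaining step of extracting an $L^2$ bound from the quartic potential is standard but must be handled before one can invoke the gradient bound to conclude.
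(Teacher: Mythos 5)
Your plan is correct and follows essentially the same route as the paper: apply Lemma \ref{l2} (with a fixed $\lambda$) to dominate the possibly negative surface term, absorb the $\ep$-weighted gradient contribution for small $\ep$, and close the $L^2$ term against the quartic growth of $W$ via Cauchy--Schwarz. The only difference is cosmetic: the paper reduces to an inequality of the form $X\le M_2(1+X^{1/2})$ for $X=\int|\buep|^4$ and cites \cite{BerKhrus} for the final step, which your Young's-inequality argument simply makes explicit.
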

\begin{proof}
  Suppose that $\left\{\buep\right\}$ satisfies $\mathcal
  E_\ep\left[\buep\right]<M$ for some constant $M>0$ uniformly in
  $\ep$. Using Lemma \ref{l2} with $\lambda=1$, the assumption on $W(t)$, and
  H\"older's inequality, we have
  \[
  \begin{split}
    \int_{\omC}&\left\{{|\nabla\bu|}^2+{|\bu|}^4\right\}\,dV\leq M+2\int_{\omC}{|\bu|}^2\,dV+\left|g_\ep\right|\int_{\cup_i\partial\calP_i^\ep}{(\bu\cdot\nu)}^2\,d\sigma \\
    &-\left|\omC\right|\leq C\epsilon\int_{\omC}{|\nabla\bu|}^2\,dV \\
    &+C\int_{\omC}{|\bu|}^2\,dV+M_1 \\
    &\leq C\epsilon\int_{\omC}{|\nabla\bu|}^2\,dV \\
    &+C{|\Omega|}^{1/2}{\left(\int_{\omC}{|\bu|}^4\,dV\right)}^{\frac{1}{2}}+M_1\,,
  \end{split}
  \]
  where $M_1>0$ is a constant independent of $\ep$. Let $\ep$ be small
  enough so that $C\epsilon<\frac{1}{2}$. Then
  \[
  \int_{\omC}\left(\frac{1}{2}{|\nabla\bu|}^2+{|\bu|}^4\right)\,dV\leq
  M_2\left[1+{\left(\int_{\omC}{|\bu|}^4\,dV\right)}^{\frac{1}{2}}\right]\,,
  \]
  uniformly in $\ep$ for some constant $M_2>0$. Using the same
  arguments as in \cite{BerKhrus} we conclude that there exists a
  constant $\tilde M>0$ such that
  $\left\|\buep\right\|_{H^1\left(\Omega\backslash\cup_i\calP_i^\ep\right)}<\tilde
  M$ uniformly in $\ep$.
\end{proof}
\begin{remark}
  Note that the proof of Theorem \ref{t1} is trivial if $g>0$ when the
  boundary term is nonnegative.
\end{remark}
Due to our assumptions on the distributions and the sizes of the
spheroids $\calP_i^\ep$, the domains in the sequence $\omC$ are
strongly connected \cite{Khruslov78} that is, for every function
$\bu\in H^1\left(\omC\right)$, there exists an extension $\tilde\bu\in
H^1(\Omega)$ such that
\begin{equation}
  \label{eqext}
  \left\|\tilde\bu\right\|_{H^1(\Omega)}\leq C\left\|\bu\right\|_{H^1\left(\omC\right)}
\end{equation}
where $C>0$ is independent of $\ep$. Note that a sufficient condition
for (\ref{eqext}) is the existence of a "security layer" around each
particle having thickness comparable with the diameter of the
particle as $\ep\to 0$ \cite{Berlyand83}. It follows that there
exists a sequence $\{\tilde\bu_\ep\}$ of extended minimizers that is
uniformly bounded in $H^1(\Omega)$ and, up to a subsequence, converges
to some $\bu_0$ weakly in $H^1(\Omega)$ and strongly in $L^2(\Omega)$.
Thus
\begin{equation}
  \label{convergence}
  \int_{\omC}{\left|\bu_\ep-\bu_0\right|}^2\,dV\to 0\,,
\end{equation}
as $\ep\to 0$. Further, by trace theorem,
\[
(\bu_0-\bU)|_{\partial\Omega}={\bf 0}.
\]

In order to identify the limiting functional and to demonstrate that
$\bu_0$ is its minimizer, we now prove
\begin{theorem}
Suppose that
\begin{equation}
  \label{limfun}
  \mathcal E[\bu]:=\int_{\Omega}\left[{\left|\nabla\bu\right|}^2+{\left(1-\left|\bu\right|^2\right)}^2+\left(A\bu,\bu\right)\right]\,dV\,,
\end{equation}
for every $\bu\in H^1(\Omega)$. Given $\bw\in C^\infty(\bar\Omega)$, there exists a sequence $\left\{\bw^\ep\right\}\subset H^1(\Omega)$ such that
\begin{equation}
  \label{realseq}
  \mathcal E_\ep\left[\bw^\ep\right]\to\mathcal E[\bw]\,,
\end{equation}
when $\ep\to 0$.
\end{theorem}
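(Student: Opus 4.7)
The plan is to take the constant recovery sequence $\bw^\ep := \bw$ for every $\ep$ (noting that $\bw \in C^\infty(\bar\Omega) \subset H^1(\Omega)$ is automatically admissible) and verify the convergence of each of the three terms of $\mathcal E_\ep[\bw]$ to the corresponding term of $\mathcal E[\bw]$. The bulk terms are immediate: since $|\nabla\bw|^2$ and $W(|\bw|) = (1-|\bw|^2)^2$ are bounded on $\bar\Omega$, and assumption 2 together with $\alpha > 1$ gives $|\cup\calP_i^\ep| = N_\ep \ep^{3\alpha}|\calP| = O(\ep^{3(\alpha-1)}) \to 0$, dominated convergence yields
\[
\int_{\omC}\bigl\{|\nabla\bw|^2 + W(|\bw|)\bigr\}\,dV \;\longrightarrow\; \int_\Omega\bigl\{|\nabla\bw|^2 + W(|\bw|)\bigr\}\,dV.
\]

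The heart of the matter is to show that the surface anchoring term converges to $\int_\Omega(A\bw,\bw)\,dV$. First I would rewrite $g_\ep\int_{\cup\partial\calP_i^\ep}(\bw,\nu)^2\,d\sigma$ by parametrizing $\partial\calP_i^\ep$ via $\bx = \bx_i^\ep + \ep^\alpha R_i^\ep\by$ for $\by\in\partial\calP$. Then $d\sigma = \ep^{2\alpha}\,d\sigma_0(\by)$ and $\nu(\bx) = R_i^\ep\nu_\calP(\by)$, so replacing $\bw(\bx)$ by $\bw(\bx_i^\ep)$ inside the integrand produces, after expanding the square and using $g_\ep\ep^{2\alpha} = g\ep^3$, precisely
\[
g_\ep\sum_i\int_{\partial\calP_i^\ep}(\bw(\bx_i^\ep),\nu)^2\,d\sigma = g\ep^3\sum_i \bw(\bx_i^\ep)^T R_i^\ep\!\left(\int_{\partial\calP}\nu\otimes\nu\,d\sigma\right)\! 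R_i^{\ep,T}\bw(\bx_i^\ep) = \int_\Omega(A^\ep\bw,\bw)\,dV,
\]
by the very definition of $A^\ep$ in (\ref{asstensor}). The error in the replacement $\bw(\bx)\mapsto\bw(\bx_i^\ep)$ is controlled by the Lipschitz bound $|\bw(\bx) - \bw(\bx_i^\ep)|\le C\ep^\alpha$ on $\partial\calP_i^\ep$; the corresponding contribution to the surface energy is at most $C g_\ep\cdot N_\ep\cdot \ep^\alpha\cdot \ep^{2\alpha} = O(\ep^{3-2\alpha}\cdot \ep^{-3}\cdot \ep^{3\alpha}) = O(\ep^\alpha)\to 0$.

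The final step is to pass to the limit in $\int_\Omega(A^\ep\bw,\bw)\,dV$. Since $\bw\otimes\bw \in C^\infty(\bar\Omega)$ and $A^\ep \to A$ in the sense of distributions by assumption 5 (with $A\in L^\infty(\Omega)$), testing componentwise against $w_iw_j$ gives
\[
\int_\Omega(A^\ep\bw,\bw)\,dV \;\longrightarrow\; \int_\Omega(A\bw,\bw)\,dV.
\]
Combining the three convergences yields $\mathcal E_\ep[\bw^\ep]\to\mathcal E[\bw]$.

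The main obstacle is the bookkeeping in the surface term: one must verify that the particular scalings ($g_\ep = g\ep^{3-2\alpha}$, particle size $\ep^\alpha$, and $N_\ep = O(\ep^{-3})$) conspire to make the leading-order surface energy on each particle line up exactly with the atom of $A^\ep$ at $\bx_i^\ep$, and that the Taylor remainder is genuinely subleading. This is a pure scaling check once the change of variables $\bx = \bx_i^\ep + \ep^\alpha R_i^\ep\by$ is performed carefully and the rotational symmetry $(\bw, R\nu) = (R^T\bw,\nu)$ is used. No hard analysis is required beyond this, because the recovery sequence is just the smooth function $\bw$ itself.
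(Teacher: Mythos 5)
Your proposal is correct, and it takes a genuinely simpler route than the paper. The paper does not use the constant sequence: it builds a corrected test function (\ref{testfn}), $\bw^\ep=\bw+\bz^\ep$, where $\bz^\ep$ is glued (via cutoffs at scale $\ep^\kappa$) from solutions $\bu_i^\ep$ of the local auxiliary problems (\ref{ui}) around each particle, and most of its proof is devoted to existence and energy bounds for these local minimizers ((\ref{l31})--(\ref{l33})), to showing that the corrector is negligible in the bulk ((\ref{zest}), (\ref{gradconv}), (\ref{nonlconv})), and to showing that the surface term evaluated on $\bu_i^\ep$ agrees with the one evaluated on the frozen value $\bw(\bx_i^\ep)$ up to $O(\ep^{7-2\alpha})$ per particle (\ref{bdrest2}); after that, the identification of $g\ep^3\sum_i\int_{\partial\calP_i}(\bw(\bx_i^\ep),\nu)^2\,d\sigma$ with $\int_\Omega(A^\ep\bw,\bw)\,dV$ and the passage to $\int_\Omega(A\bw,\bw)\,dV$ via assumption 5 in (\ref{bdry1})--(\ref{greal}) is exactly your final step. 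Your shortcut exploits the fact that in this dilute, subcritical regime ($1<\alpha<2$, $g_\ep=g\ep^{3-2\alpha}$) the corrector contributes nothing at leading order --- a fact the paper itself establishes through (\ref{zest}) and (\ref{bdrest2}) --- so taking $\bw^\ep\equiv\bw$ works: the bulk terms converge since $|\cup_i\calP_i^\ep|=O(\ep^{3(\alpha-1)})\to 0$, and your freezing/rescaling computation of the surface term is sound, with total replacement error $g_\ep N_\ep\,\ep^{\alpha}\,\ep^{2\alpha}=O(\ep^{\alpha})\to 0$ and the leading term matching the definition of $A^\ep$ in (\ref{asstensor}) exactly. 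The one small caveat --- pairing the distributional convergence $A^\ep\to A$ with $w_jw_k\in C^\infty(\bar\Omega)$, which is not compactly supported in $\Omega$ --- is shared verbatim by the paper, so it is not a gap specific to your argument. What the paper's heavier construction buys is reuse: the same corrector $\bz_\delta^\ep$ and the estimates for it are invoked in the subsequent lower-bound theorem, and the local-problem machinery is what one would need in a critical scaling where boundary layers around the particles contribute at order one; for the recovery-sequence statement as posed, your elementary argument suffices.
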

\begin{proof}
  We begin by constructing a test function. Let $\bw\in
  C^\infty(\bar\Omega)$ and set
  \begin{equation}
    \label{testfn}
    \bw^\ep:=\bw+\bz^\ep=\bw+\sum_i\left(\bu_i^\ep-\bw\right)\phi\left(\frac{\left|\bx-\bx_i^\ep\right|}{\ep^\kappa}\right)\,,
  \end{equation}
  where $\kappa\in(1,\alpha)$, the function $\phi\in
  C^\infty(\mathbb{R}^+)$ satisfies
  \[
  \phi(t)=\left\{
    \begin{array}{ll}
      1, & \mathrm{if}\ t<\frac{1}{2}\,, \\
      0, & \mathrm{if}\ t>1\,,
    \end{array}
  \right.
  \]
  For every $i=1,\ldots,N_\ep$, the function $\bu_i^\ep$ is a solution
  of the following problem
  \begin{equation}
    \label{ui}
    \left\{
      \begin{array}{ll}
        \Delta \bu_i^\ep-\frac{1}{\ep^{2\alpha}}\left(\bu_i^\ep-\bw_i\right)={\bf 0}, & \mathrm{in}\ B_{\ep^\kappa}(\bx_i^\ep)\backslash\calP_i^{\ep}\,, \\
        \frac{\partial\bu_i^\ep}{\partial\nu}+g_\ep\left(\bu_i^\ep,\nu\right)\nu={\bf 0}, & \mathrm{on}\ \partial\calP_i^{\ep}\,, \\
        \bu_i^\ep=\bw_i, & \mathrm{when}\ |\bx|=\ep^\kappa\,,
      \end{array}
    \right.
  \end{equation}
  where $\bw_i=\bw(\bx_i^\ep)$.

  To understand the behavior of a solution to (\ref{ui}), for a fixed
  $i\in\{1,\ldots,N_\ep\}$, we rescale the lengths by the
  characteristic size of the particle:
  $\by=\ep^{-\alpha}(\bx-\bx_i^\ep)$ and set $\hat
  \bu_i^\ep(\by):=\bu_i^\ep\left(\bx_i^\ep+\ep^\alpha\by\right)-\bw_i$.
  Then
  \begin{equation}
    \label{uind}
    \left\{
      \begin{array}{ll}
        \Delta \hat\bu_i^\ep-\hat\bu_i^\ep={\bf 0}, & \mathrm{in}\ B_{\ep^{\kappa-\alpha}}({\bf 0})\backslash\calP_i\,, \\
        \frac{\partial\hat\bu_i^\ep}{\partial\nu}+g\ep^{3-\alpha}\left(\hat\bu_i^\ep+\bw_i,\nu\right)\nu={\bf 0}, & \mathrm{on}\ \partial\calP_i\,, \\
        \hat\bu_i^\ep={\bf 0}, & \mathrm{when}\ |\by|=\ep^{\kappa-\alpha}\,,
      \end{array}
    \right.
  \end{equation}
  where the spheroid $\calP_i=\ep^{-\alpha}\calP^i_\ep$ is centered at
  the origin.  Note that $\hat\bu_i^\ep$ is a critical point of the
  functional
  \begin{equation}
    \label{local_func}
    \hat E^i_\ep[\bu]:=\int_{B_R({\bf 0})\backslash\calP_i}\left[{|\nabla\bu|}^2+{\left|\bu\right|}^2\right]\,dV+g\ep^{3-\alpha}\int_{\partial\calP_i}{(\bu+\bw_i,\nu)}^2\,d\sigma\,,
  \end{equation}
  where $\bu\in H^1_0\left(B_R({\bf 0})\backslash\calP_i\right)$ and
  $R=\ep^{\kappa-\alpha}$. We can assume that $\hat\bu_i^\ep$ is a
  global minimizer of $\hat E^i_\ep$ over $H^1_0\left(B_R({\bf
      0})\backslash\calP_i\right)$ once we prove the following
  \begin{lemma}
    The $\min_{H^1_0\left(B_R({\bf 0})\backslash\calP_i\right)}\hat
    E^i_\ep$ is attained and the minimizer satisfies
    \begin{align}
      \label{l31}
      &\int_{B_R({\bf 0})\backslash\calP_i}{\left|\nabla\hat\bu_i^{\ep}\right|}^2\,dV\leq C\ep^{6-2\alpha}\,,\\
      \label{l32}
      &\int_{B_R({\bf 0})\backslash\calP_i}{\left|\hat\bu_i^{\ep}\right|}^2\,dV\leq C\ep^{6-2\alpha}\,,\\
      \label{l33}
      &\int_{\partial\calP_i}{\left|\hat\bu_i^{\ep}\right|}^2\,d\sigma\leq
      C\ep^{6-2\alpha}\,.
    \end{align}
  \end{lemma}
  \begin{proof}
    1. {\em Boundedness from above}. Since $\bu\equiv{\bf 0}$ is in
    $H^1_0\left(B_R({\bf 0})\backslash\calP_i\right)$,
    \begin{equation}
      \label{l3e0}
      \min_{H^1_0\left(B_R({\bf 0})\backslash\calP_i\right)}\hat E^i_\ep \leq \hat E^i_\ep\left[{\bf 0}\right]=g\ep^{3-\alpha}\int_{\partial\calP_i}{\left(\bw(\bx_i^\ep),\nu\right)}^2\,d\sigma<1\,,
    \end{equation}
    when $\ep$ is sufficiently small.

    2. {\em Boundedness from below}. When $g\geq 0$, the result is
    automatic as the functional $\hat E^i_\ep$ is nonnegative. Suppose
    that $g<0$. Let $\ep>0$ be small enough so that $\calP_i\subset
    B_R({\bf 0})$ and choose $\bu\in C_0^\infty\left(B_R({\bf
        0})\right)$ such that the support of $\bu$ is contained in
    $B_R({\bf 0})$.  Following the same line of reasoning as in the
    proof of Lemma \ref{l1} and switching to spherical coordinates
    with $z-$axis along the long axis of the spheroid $\calP_i$, we
    have
    \[
    \bu(\rho(\phi),\theta,\phi)=-\int_{\rho(\phi)}^R\bu_r(r,\theta,\phi)\,dr\,,
    \]
    where
    \begin{equation}
      \label{rho}
      \rho(\phi)=\frac{ab}{{\left(b^2{\sin^2}\phi+a^2{\cos^2}\phi\right)}^\frac{1}{2}}
    \end{equation}
    is the equation of the spheroid. By H\"older's inequality
    \[
    \begin{split}
      {\left(\int_{\rho(\phi)}^R\bu_r(r,\theta,\phi)\,dr\right)}^2&\leq\int_{\rho(\phi)}^R{\left|\bu_r(r,\theta,\phi)\right|}^2r^2\,dr\int_{\rho(\phi)}^Rr^{-2}\,dr\\
      &
      \leq\frac{1}{\rho(\phi)}\int_{\rho(\phi)}^R{\left|\bu_r(r,\theta,\phi)\right|}^2r^2\,dr\,,
    \end{split}
    \]
    then
    \begin{equation}
      \label{l3e1}
      {\left|\bu(\rho(\phi),\theta,\phi)\right|}^2\leq\frac{1}{\rho(\phi)}\int_{\rho(\phi)}^R{\left|\bu_r(r,\theta,\phi)\right|}^2r^2\,dr\,.
    \end{equation}
    For the prolate spheroid with long axis in the direction of
    $z$-axis, the element of the surface area is given by
    \begin{equation}
      \label{surf_jac}
      d\sigma={\left(\rho^2+\rho_\phi^2\right)}^\frac{1}{2}\rho\sin{\phi}\,d\theta\,d\phi\,.
    \end{equation}
    Multiplying (\ref{l3e1}) by the Jacobian and integrating, we
    obtain
    \[
    \begin{split}
      \int_{0}^{\pi}\int_{0}^{2\pi}&{\left|\bu(\rho(\phi),\theta,\phi)\right|}^2{\left(\rho^2+\rho_\phi^2\right)}^{1/2}\rho\sin\phi\,d\theta\,d\phi \\
      &\leq\int_{0}^{\pi}\int_{0}^{2\pi}\int_{\rho(\phi)}^R{\left|\bu_r(r,\theta,\phi)\right|}^2{\left(\rho^2+\rho_\phi^2\right)}^{1/2}r^2\sin\phi\,dr\,d\theta\,d\phi\,,
    \end{split}
    \]
    then
    \begin{equation}
      \label{l3e2}
      \begin{split}
        \int_{\partial\calP_i}{|\bu|}^2\,d\sigma&\leq \max_{\phi\in[0,\pi]}{\left(\rho^2+\rho_\phi^2\right)}^{1/2}\int_{B_R({\bf 0})\backslash\calP_i}{|\nabla\bu|}^2\,dV \\
        &\leq C\int_{B_R({\bf
            0})\backslash\calP_i}{|\nabla\bu|}^2\,dV\,,
      \end{split}
    \end{equation}
    where the constant $C>0$ depends only on $\calP_i$.

    Using (\ref{l3e2}) we obtain the following estimate
    \begin{equation}
      \label{l3e3}
      \begin{split}
        &\hat E_\ep^i[\bu]=\int_{B_R({\bf{0}})\backslash\calP_i}\left[{|\nabla\bu|}^2+{|\bu|}^2\right]\,dV+g\ep^{3-\alpha}\int_{\partial\calP_i}{(\bu+\bw_i,\nu)}^2\,d\sigma \\
        &\geq\int_{B_R({\bf{0}})\backslash\calP_i}\left[{|\nabla\bu|}^2+{|\bu|}^2\right]\,dV-2|g|\ep^{3-\alpha}\left[\int_{\partial\calP_i}{(\bu,\nu)}^2\,d\sigma\right.\\
        &\left.+\int_{\partial\calP_i}{(\bw_i,\nu)}^2\,d\sigma\right]= \left(1-C\ep^{3-\alpha}\right)\int_{B_R({\bf{0}})\backslash\calP_i}{|\nabla\bu|}^2\,dV \\
        &+\int_{B_R({\bf 0})\backslash\calP_i}{|\bu|}^2\,dV-2|g|\ep^{3-\alpha}\int_{\partial\calP_i}{(\bw_i,\nu)}^2\,d\sigma \\
        &\geq\frac{1}{2}\int_{B_R({\bf{0}})\backslash\calP_i}\left[{|\nabla\bu|}^2+{|\bu|}^2\right]\,dV-1\,,
      \end{split}
    \end{equation}
    when $\ep$ is sufficiently small uniformly in $\bu$. It
    follows that
    \[\hat E_\ep^i[\bu]>-1\,,\]
    for the same values of $\ep$. Since
    $C_0^\infty\left(B_R({\bf{0}})\backslash\calP_i\right)$ is dense
    in $H^1_0\left(B_R({\bf{0}})\backslash\calP_i\right)$, the
    inequalities (\ref{l3e2}) and (\ref{l3e3}) hold for all $\bu\in
    H^1\left(B_R({\bf{0}})\backslash\calP_i\right)$.

    3. {\em Existence of a minimizer}. Suppose that
    $\left\{\bu_k\right\}\subset
    H^1_0\left(B_R({\bf{0}})\backslash\calP_i\right)$ is a minimizing
    sequence for $\hat E_\ep^i$. For a sufficiently small $\ep$, from
    (\ref{l3e0}) and (\ref{l3e3}) we can assume that
    \begin{equation}
      \label{l3e3.1}
      \int_{B_R({\bf{0}})\backslash\calP_i}\left[{|\nabla\bu|}^2+{|\bu|}^2\right]\,dV<2\,,
    \end{equation}
    uniformly in $k$. Then, up to a subsequence,
    $\left\{\bu_k\right\}$ converges weakly in the space
    $H^1_0\left(B_R({\bf{0}})\backslash\calP_i\right)$ to a
    $\hat\bu^\ep_i$ that minimizes $\hat E_\ep^i$ by the lower
    semicontinuity of (\ref{local_func}) and the trace theorem.

    4. {\em Properties of the minimizer}. In this part of the proof,
    $C$ denotes various constants that depend on $\calP_i$ and $\bw_i$
    only.  Multiplying the equation (\ref{uind}) by $\hat\bu_i^{\ep}$
    and integrating by parts over $B_R({\bf 0})\backslash\calP_i$, we
    have
  \begin{equation}
    \label{l3e5}
    \begin{split}
    \int_{B_R({\bf 0})\backslash\calP_i}&\left[{\left|\nabla\hat\bu_i^{\ep}\right|}^2+{\left|\hat\bu_i^{\ep}\right|}^2\right]\,dV \\
    &=-g\ep^{3-\alpha}\int_{\partial\calP_i}\left(\hat\bu_i^{\ep}+\bw_i,\nu\right)\left(\hat\bu_i^{\ep},\nu\right)\,d\sigma\,.
    \end{split}
  \end{equation}
  From (\ref{l3e3}) and H\"older's inequality it follows that
  \begin{equation}
    \label{l3e6}
    \begin{split}
    \int_{\partial\calP_i}&\left(\hat\bu_i^{\ep}+\bw_i,\nu\right)\left(\hat\bu_i^{\ep},\nu\right)\,d\sigma\leq\int_{\partial\calP_i}{\left(\hat\bu_i^{\ep},\nu\right)}^2\,d\sigma+C{\left(\int_{\partial\calP_i}{\left(\hat\bu_i^{\ep},\nu\right)}^2\,d\sigma\right)}^{1/2} \\
    &\leq C\left[\int_{B_R({\bf 0})\backslash\calP_i}{\left|\nabla\hat\bu_i^{\ep}\right|}^2\,dV+{\left(\int_{B_R({\bf 0})\backslash\calP_i}{\left|\nabla\hat\bu_i^{\ep}\right|}^2\,dV\right)}^{1/2}\right]\,,
    \end{split}
  \end{equation}
  when $\ep$ is small enough. Now, combining (\ref{l3e5}) and
  (\ref{l3e6}) we obtain that
  \begin{align}
    \label{l3e7}
\begin{split}
  \int_{B_R({\bf
      0})\backslash\calP_i}&{\left|\nabla\hat\bu_i^{\ep}\right|}^2\,dV\leq
  C\ep^{3-\alpha}\left[\int_{B_R({\bf
        0})\backslash\calP_i}{\left|\nabla\hat\bu_i^{\ep}\right|}^2\,dV\right.
  \\ &\left.+{\left(\int_{B_R({\bf
            0})\backslash\calP_i}{\left|\nabla\hat\bu_i^{\ep}\right|}^2\,dV\right)}^{1/2}\right]\,,
\end{split} \\
    \label{l3e8}
\begin{split}
  \int_{B_R({\bf
      0})\backslash\calP_i}&{\left|\hat\bu_i^{\ep}\right|}^2\,dV\leq
  C\ep^{3-\alpha}\left[\int_{B_R({\bf
        0})\backslash\calP_i}{\left|\nabla\hat\bu_i^{\ep}\right|}^2\,dV\right.
  \\ &\left.+{\left(\int_{B_R({\bf
            0})\backslash\calP_i}{\left|\nabla\hat\bu_i^{\ep}\right|}^2\,dV\right)}^{1/2}\right]\,.
\end{split}
  \end{align}
  From (\ref{l3e7}), we find that
\[
  \int_{B_R({\bf 0})\backslash\calP_i}{\left|\nabla\hat\bu_i^{\ep}\right|}^2\,dV\leq C\ep^{6-2\alpha}\,,
\]
and then, from (\ref{l3e8})
\[
  \int_{B_R({\bf 0})\backslash\calP_i}{\left|\hat\bu_i^{\ep}\right|}^2\,dV\leq C\ep^{6-2\alpha}\,,
\]
uniformly in $\ep\ll 1$. Finally, (\ref{l33}) follows from (\ref{l31})
and (\ref{l3e2}).
  \end{proof}
  Recall that $R=\ep^{\kappa-\alpha}$. Rewriting (\ref{l31}-\ref{l33}) in terms of $\bx$ gives
    \begin{align}
      \label{l31r}
      &\int_{B_{\ep^\kappa}(\bx_i^\ep)\backslash\calP_i^\ep}{\left|\nabla\bu_i^{\ep}\right|}^2\,dV\leq C\ep^{6-\alpha}\,,\\
      \label{l32r}
      &\int_{B_{\ep^\kappa}(\bx_i^\ep)\backslash\calP_i^\ep}{\left|\bu_i^{\ep}-\bw_i\right|}^2\,dV\leq C\ep^{6+\alpha}\,,\\
      \label{l33r}
      &\int_{\partial\calP_i^\ep}{\left|\bu_i^{\ep}-\bw_i\right|}^2\,d\sigma\leq
      C\ep^6\,,
    \end{align}
when $\ep$ is sufficiently small. Furthermore
\begin{equation}
  \label{bdrest}
  \begin{split}
    &g_\ep\int_{\partial\calP_i^\ep}{\left(\bu_i^\ep,\nu\right)}^2\,d\sigma=g\ep^{3-2\alpha}\int_{\partial\calP_i^\ep}{\left\{\left(\bu_i^\ep-\bw_i,\nu\right)+\left(\bw_i,\nu\right)\right\}}^2\,d\sigma \\
    &=g\ep^{3-2\alpha}\left[\int_{\partial\calP_i^\ep}{\left(\bw_i,\nu\right)}^2\,d\sigma+2\int_{\partial\calP_i^\ep}{\left(\bw_i,\nu\right)}{\left(\bu_i^\ep-\bw_i,\nu\right)}\,d\sigma\right. \\
    &+\left.\int_{\partial\calP_i^\ep}{\left(\bu_i^\ep-\bw_i,\nu\right)}^2\,d\sigma\right]\,.
  \end{split}
\end{equation}
By H\"older's inequality, (\ref{l33r}), and the fact that $\bw\in
C^\infty(\bar\Omega)$, we have
\begin{equation}
  \label{bdrest1}
\begin{split}
  \int_{\partial\calP_i^\ep}&{\left(\bw_i,\nu\right)}{\left(\bu_i^\ep-\bw_i,\nu\right)}\,d\sigma \\ &\leq{\left(\int_{\partial\calP_i^\ep}{\left(\bw_i,\nu\right)}^2\,d\sigma\right)}^{1/2}{\left(\int_{\partial\calP_i^\ep}{\left(\bu_i^\ep-\bw_i,\nu\right)}^2\,d\sigma\right)}^{1/2} \\
  &\leq{\left(\int_{\partial\calP_i^\ep}{\left(\bw_i,\nu\right)}^2\,d\sigma\right)}^{1/2}{\left(\int_{\partial\calP_i^\ep}{\left|\bu_i^\ep-\bw_i\right|}^2\,d\sigma\right)}^{1/2}=O\left(\ep^4\right)\,.
\end{split}
\end{equation}
Since the last integral in (\ref{bdrest}) is $O\left(\ep^6\right)$, we conclude that
\begin{equation}
  \label{bdrest2}
  g_\ep\int_{\partial\calP_i^\ep}{\left(\bu_i^\ep,\nu\right)}^2\,d\sigma=g\ep^{3-2\alpha}\int_{\partial\calP_i^\ep}{\left(\bw_i,\nu\right)}^2\,d\sigma+O\left(\ep^{7-2\alpha}\right)\,.
\end{equation}

We now return to estimating $\mathcal E_\ep\left[\bw^\ep\right]$. From
(\ref{testfn}) we have
\[
\nabla\bw^\ep=\nabla\bw+\nabla\bz^\ep\,,
\]
where
\begin{equation}
  \label{testfnder}
  \begin{split}
    \nabla\bz^\ep&=\sum_i\left\{\phi\left(\ep^{-\kappa}\left|\bx-\bx_i^\ep\right|\right)\nabla\left(\bu_i^\ep-\bw\right)\right.\\
    &\left.+\frac{1}{\ep^\kappa}\phi^\prime\left(\ep^{-\kappa}\left|\bx-\bx_i^\ep\right|\right)\frac{\bx-\bx_i^\ep}{\left|\bx-\bx_i^\ep\right|}\otimes\left(\bu_i^\ep-\bw\right)\right\}\,.
  \end{split}
\end{equation}
Then, since the supports of
$\phi\left(\ep^{-\kappa}\left|\bx-\bx_i^\ep\right|\right)$ and
$\phi\left(\ep^{-\kappa}\left|\bx-\bx_j^\ep\right|\right)$ are
mutually nonintersecting for any $i\neq j\in{1,\ldots,N_\ep}$, using
the definition of $\phi$, we have
\begin{equation}
  \label{dz1}
  \begin{split}
    &\int_{\Omega\backslash\cup_i\calP_i^\ep}{\left|\nabla\bz_i^\ep\right|}^2\,dV\leq 2\sum_i\int_{\Omega\backslash\calP_i^\ep}\phi^2\left(\ep^{-\kappa}\left|\bx-\bx_i^\ep\right|\right){\left|\nabla\left(\bu_i^\ep-\bw\right)\right|}^2\,dV \\
    &+\frac{2}{\ep^{2\kappa}}\sum_i\int_{\Omega\backslash\calP_i^\ep}{\left[\phi^\prime\left(\ep^{-\kappa}\left|\bx-\bx_i^\ep\right|\right)\right]}^2{\left|\bu_i^\ep-\bw\right|}^2\,dV \\
    &\leq C\sum_i\int_{B_{\ep^\kappa}\left(\bx_i^\ep\right)\backslash\calP_i^\ep}\left[{\left|\nabla\left(\bu_i^\ep-\bw\right)\right|}^2+\frac{1}{\ep^{2\kappa}}{\left|\bu_i^\ep-\bw\right|}^2\right]\,dV\,,
  \end{split}
\end{equation}
where $C$ depends on $\phi$ only. Since $\bw\in C^\infty(\bar\Omega)$, the following estimates hold
\begin{align}
  \label{wap1}
  \begin{split}
    {\left|\bu_i^\ep(\bx)-\bw(\bx)\right|}^2&\leq 2{\left|\bu_i^\ep(\bx)-\bw_i\right|}^2+2{\left|\bw(\bx)-\bw_i\right|}^2 \\ 
    &\leq C\left[{\left|\bu_i^\ep(\bx)-\bw_i\right|}^2+{\left|\bx-\bx_i^\ep\right|}^2\right]\,,
  \end{split} \\
  \label{wap2}
  &{\left|\nabla\left(\bu_i^\ep(\bx)-\bw(\bx)\right)\right|}^2\leq
  2{\left|\nabla\bu_i^\ep(\bx)\right|}^2+C\,,
\end{align}
for every $\bx\in
B_{\ep^\kappa}\left(\bx_i^\ep\right)\backslash\calP_i^\ep,$ where
$C>0$ is a constant that depends on $\bw$ only. Therefore, by (\ref{l31r}) and (\ref{l32r}) we obtain
\begin{equation}
  \label{wap3}
  \begin{split}
    &\int_{B_{\ep^\kappa}\left(\bx_i^\ep\right)\backslash\calP_i^\ep}{\left|\nabla\left(\bu_i^\ep-\bw\right)\right|}^2\,dV \\ &\leq 2\int_{B_{\ep^\kappa}\left(\bx_i^\ep\right)\backslash\calP_i^\ep}{\left|\nabla\bu_i^\ep\right|}^2\,dV+C\left|B_{\ep^\kappa}\left(\bx_i^\ep\right)\right|=O(\ep^{\min\left\{6-\alpha,3\kappa\right\}})\,,
  \end{split}
\end{equation}
and
\begin{equation}
  \label{wap4}
  \begin{split}
    \frac{1}{\ep^{2\kappa}}&\int_{B_{\ep^\kappa}\left(\bx_i^\ep\right)\backslash\calP_i^\ep}{\left|\bu_i^\ep-\bw\right|}^2\,dV \\
    &\leq
    C\left[\frac{1}{\ep^{2\kappa}}\int_{B_{\ep^\kappa}\left(\bx_i^\ep\right)\backslash\calP_i^\ep}{\left|\bu_i^\ep-\bw_i\right|}^2\,dV+\ep^{3\kappa}\right]=O\left(\ep^{\min\{6+\alpha-2\kappa,3\kappa\}}\right)
\end{split}
\end{equation}
Here $\left|B_{\ep^\kappa}\left(\bx_i^\ep\right)\right|$ is the volume of $B_{\ep^\kappa}\left(\bx_i^\ep\right)$. It follows that
\begin{equation}
  \label{zest}
  \int_{\Omega\backslash\cup_i\calP_i^\ep}{\left|\nabla\bz_i^\ep\right|}^2\,dV=O\left(\ep^{\min\{3-\alpha,3(\kappa-1)\}}\right)=o(1)\,,
\end{equation}
since $1<\alpha<2$, $1<\kappa<\alpha$, and there are
$O\left(\ep^{-3}\right)$ spheroidal particles. In addition, by
H\"older's inequality,
\begin{equation}
  \label{gradconv}
  \begin{split}
    \int_{\Omega\backslash\cup_i\calP_i^\ep}&{\left|\nabla\bw^\ep\right|}^2\,dV=\int_{\Omega\backslash\cup_i\calP_i^\ep}{\left|\nabla\bz_i^\ep+\nabla\bw\right|}^2\,dV \\
    &=\int_{\Omega\backslash\cup_i\calP_i^\ep}{\left|\nabla\bw\right|}^2\,dV+o(1)=\int_{\Omega}{\left|\nabla\bw\right|}^2\,dV+O\left(\ep^{3(\alpha-1)}\right)+o(1) \\
    &=\int_{\Omega}{\left|\nabla\bw\right|}^2\,dV+o(1)\,,
  \end{split}
\end{equation}
when $\ep$ is small. This result extends to $\bw\in H^1(\Omega)$ by a density argument.

Next, consider the asymptotic behavior of the nonlinear term.
Extending continuously $\bw^\ep$ to $\tilde\bw^\ep\in H^1(\Omega)$ and
using the uniform boundedness of $\bw_\ep$ in $H^1(\Omega)$ (e.g. from
(\ref{gradconv}) and Poincare's inequality), we conclude that there is
a subsequence such that $\tilde \bw^\ep\rightharpoonup\bw$ weakly in
$H^1(\Omega)$ and strongly in $L^p(\Omega)$ where $1<p<6$. Since the
Lebesgue measure of the set $\cup_i\calP_i^\ep$ converges to zero when
$\ep\to 0$ and $\bw\in C^\infty(\bar\Omega)$, we have that
\begin{equation}
  \label{nonlconv}
  \int_{\Omega\backslash\cup_i\calP_i^\ep}{\left(1-\left|\bw^\ep\right|^2\right)}^2\,dV\to\int_{\Omega}{\left(1-\left|\bw\right|^2\right)}^2\,dV\,,
\end{equation}
as $\ep\to 0$.

Finally, by (\ref{bdrest2}), we determine that
\begin{equation}
  \label{bdryconv}
  \begin{split}
    g\ep^{3-2\alpha}&\sum_i\int_{\partial\calP_i^\ep}{\left(\bu_i^\ep,\nu\right)}^2\,d\sigma=g\ep^{3-2\alpha}\sum_i\int_{\partial\calP_i^\ep}{\left(\bw_i,\nu\right)}^2\,d\sigma+O\left(\ep^{4-2\alpha}\right)\\
    &=g\ep^{3-2\alpha}\sum_i\int_{\partial\calP_i^\ep}{\left(\bw_i,\nu\right)}^2\,d\sigma+o(1) \\
    &=g\ep^3\sum_i\int_{\partial\calP_i}{\left(\bw_i,\nu\right)}^2\,d\sigma+o(1)\,,
  \end{split}
\end{equation}
since $\alpha<2$. Thus
\begin{equation}
  \label{enconv}
  \begin{split}
    \mathcal E_\ep\left[\bw^\ep\right]&=\int_{\Omega}\left[{\left|\nabla\bw\right|}^2+{\left(1-\left|\bw\right|^2\right)}^2\right]\,dV \\ &+g\ep^3\sum_i\int_{\partial\calP_i}{\left(\bw_i,\nu\right)}^2\,d\sigma+o(1)\,,
  \end{split}
\end{equation}
when $\ep$ is small. It remains to determine the asymptotic limit of
the boundary term as $\ep\to 0$. The sum in this term can be rewritten
as follows
\begin{equation}
  \label{bdry1}
  \begin{split}
    g\ep^3\sum_i&\int_{\partial\calP_i}{\left(\bw\left(\bx_i^\ep\right),\nu\right)}^2\,d\sigma=g\ep^3\sum_i\int_{\partial\calP_i}{\left(w_k\left(\bx_i^\ep\right){\bf
          e}_k,\nu\right)}^2\,d\sigma \\
    &=g\ep^3\sum_i\left[\int_{\partial\calP_i}{\left({\bf
            e}_k,\nu\right)}{\left({\bf
            e}_j,\nu\right)}\,d\sigma\right]w_j\left(\bx_i^\ep\right)w_k\left(\bx_i^\ep\right) \\
    &=g\ep^3\sum_i\left[\int_{\partial\calP}{\left({\bf
            e}_k,R_i^\ep\nu\right)}{\left({\bf
            e}_j,R_i^\ep\nu\right)}\,d\sigma\right]w_j\left(\bx_i^\ep\right)w_k\left(\bx_i^\ep\right) \\
    &=\int_\Omega\left(A^\ep\left(\bx\right)\bw\left(\bx\right),\bw\left(\bx\right)\right)\,dV\,,
  \end{split}
\end{equation}
where ${\bf e}_k,\ k=1,2,3$ is an orthonormal basis in $\mathbb R^3$,
the matrix-valued function
$A^\ep_{jk}\left(\bx\right)=g\ep^3\sum_i\delta\left(\bx-\bx_i^\ep\right)\int_{\partial\calP}{\left({\bf
      e}_k,R_i^\ep\nu\right)}{\left({\bf e}_j,R_i^\ep\nu\right)}\,d\sigma$,
the function $R_i^\ep\in M^{3\times 3},\ i=1,\ldots,N_\ep$ is a rotation
matrix, such that $\calP_i=R_i^\ep\calP$.  Further,
$w_k=\left(\bw\left(\bx_i^\ep\right),{\bf e}_k\right)$, where $k=1,2,3$
and we assume summation over the repeated indices. Thus, from our
assumptions on the geometry of the domain
\begin{equation}
  \label{greal}
  \mathcal E_\ep\left[\bw^\ep\right]\to \int_{\Omega}\left[{\left|\nabla\bw\right|}^2+{\left(1-\left|\bw\right|^2\right)}^2+\left(A\bw,\bw\right)\right]\,dV\,,
\end{equation}
for every $\bw\in C^\infty(\bar\Omega)$.
\end{proof}

\begin{theorem}
  Let a sequence of minimizers $\{\bu_\ep\}$ of $\mathcal E_\ep$ be
  such that the sequence $\{\tilde\bu_\ep\}$ of extensions of
  $\{\bu_\ep\}$ to $\Omega$ converges weakly in $H^1(\Omega)$ to some
  $\bu\in H^1(\Omega)$. Then
  \begin{equation}
    \label{lowsem}
    \liminf_{\ep\to 0}\mathcal E_\ep[\bu_\ep]\geq \mathcal E[\bu]\,,
  \end{equation}
  where $\mathcal E$ is defined by (\ref{limfun}).
\end{theorem}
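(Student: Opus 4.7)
The plan is to establish the inequality term by term. The functional $\mathcal E_\ep$ splits into three pieces, namely the Dirichlet integral, the double-well potential, and the surface anchoring; I would handle them separately. Only the surface term is genuinely delicate, because $g$ is allowed to be negative and $\bu_\ep$ converges only weakly in $H^1(\Omega)$.

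For the Dirichlet term, because $\bigl|\cup_i\calP_i^\ep\bigr| = O(\ep^{3(\alpha-1)}) \to 0$, the characteristic function $\chi_\ep$ of $\omC$ converges to $1$ strongly in every $L^p(\Omega)$ with $p<\infty$. Combined with the weak $L^2$-convergence of $\nabla\tilde\bu_\ep$ to $\nabla\bu$, this gives $\chi_\ep\nabla\tilde\bu_\ep \rightharpoonup \nabla\bu$ weakly in $L^2(\Omega)$, and weak lower semicontinuity of the $L^2$-norm yields
\[
\liminf_{\ep\to 0}\int_{\omC}|\nabla\bu_\ep|^2\,dV \geq \int_\Omega |\nabla\bu|^2\,dV.
\]
For the potential, Rellich--Kondrachov gives strong convergence of $\tilde\bu_\ep$ to $\bu$ in $L^p(\Omega)$ for every $p<6$, so $|\tilde\bu_\ep|^2 \to |\bu|^2$ strongly in $L^2(\Omega)$; combined with $\chi_\ep\to 1$ in $L^1(\Omega)$ I obtain
\[
\int_{\omC}(1-|\bu_\ep|^2)^2\,dV \to \int_\Omega(1-|\bu|^2)^2\,dV,
\]
which is in fact a full limit, not merely a liminf.

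The surface term is where the work lies. I would introduce an arbitrary $\bw\in C^\infty(\bar\Omega)$ and expand
\[
(\bu_\ep,\nu)^2 = (\bw,\nu)^2 + 2(\bw,\nu)(\bu_\ep-\bw,\nu) + (\bu_\ep-\bw,\nu)^2,
\]
then integrate against $g_\ep\,d\sigma$ on $\cup\partial\calP_i^\ep$. The leading contribution $g_\ep\int(\bw,\nu)^2\,d\sigma$ converges to $\int_\Omega(A\bw,\bw)\,dV$ by essentially the computation already carried out in (\ref{bdrest2})--(\ref{bdry1}), using the smoothness of $\bw$ (so that $\bw(\bx)=\bw(\bx_i^\ep)+O(\ep^\alpha)$ on $\partial\calP_i^\ep$) together with assumption 5 on the distributional convergence of $A^\ep$ to $A$. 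The two remaining contributions are estimated through Lemma \ref{l2} applied to $\bu_\ep-\bw$ with $\lambda=1$,
\[
|g_\ep|\int_{\cup\partial\calP_i^\ep}|\bu_\ep-\bw|^2\,d\sigma \leq C\left[\ep\int_{\omC}|\nabla(\bu_\ep-\bw)|^2\,dV + \int_{\omC}|\bu_\ep-\bw|^2\,dV\right],
\]
with a matching Cauchy--Schwarz bound for the cross term. Choosing $\bw=\bu_\delta\in C^\infty(\bar\Omega)$ with $\|\bu-\bu_\delta\|_{H^1(\Omega)}<\delta$, using Theorem \ref{t1} to obtain uniform $H^1(\omC)$-boundedness of $\bu_\ep-\bu_\delta$, and using the strong $L^2$-convergence $\tilde\bu_\ep\to\bu$, I first send $\ep\to 0$ (to kill the $\ep$-weighted gradient contribution and to reduce the $L^2$-term to $\|\bu-\bu_\delta\|_{L^2}^2\leq\delta^2$), and then $\delta\to 0$. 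Since $A\in L^\infty(\Omega)$, $\int(A\bu_\delta,\bu_\delta)\to\int(A\bu,\bu)$, and the surface term satisfies $\liminf_\ep g_\ep\int_{\cup\partial\calP_i^\ep}(\bu_\ep,\nu)^2\,d\sigma\geq\int_\Omega(A\bu,\bu)\,dV$.

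The hardest point is precisely this double-limit bookkeeping for the anchoring integral. Because $g$ need not be positive, the quadratic surface term is not sign-definite and the remainder $g_\ep\int(\bu_\ep-\bw,\nu)^2\,d\sigma$ cannot simply be discarded, as it could be the algebraically smallest piece. Lemma \ref{l2} rescues us by producing the crucial factor of $\ep$ in front of the gradient contribution, which is exactly what converts uniform $H^1$-boundedness of $\bu_\ep-\bu_\delta$ into a vanishing boundary error in the limit; combined with the smooth approximation of $\bu$, this yields the desired estimate. Summing the three contributions gives $\liminf_\ep\mathcal E_\ep[\bu_\ep]\geq\mathcal E[\bu]$.
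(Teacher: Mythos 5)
Your proposal is correct, but it follows a genuinely different route from the paper. The paper does not argue term by term: it builds the corrector-based competitors $\bu^\ep_\delta=\bu_\delta+\bz^\ep_\delta$ from the local problems (\ref{ui}), expands $\mathcal E_\ep[\bu_\ep]$ around $\bu^\ep_\delta$ as in (\ref{eq:long}), and then uses the minimality of $\bu_\ep$, i.e. $\mathcal E_\ep[\bu_\ep]\le\mathcal E_\ep[\bu^\ep_\delta]$, to bound $\|\nabla\zeta^\ep_\delta\|_{L^2(\omC)}$ by $C\|\bu-\bu_\delta\|_{H^1(\Omega)}$ (estimates (a)--(g) and (\ref{eq:grzeta})); the lower bound then comes from $\mathcal E_\ep[\bu_\ep]\ge\mathcal E_\ep[\bu^\ep_\delta]-\int_{\omC}|\nabla\zeta^\ep_\delta|^2\,dV$ together with $\mathcal E_\ep[\bu^\ep_\delta]\to\mathcal E[\bu_\delta]$ and $\delta\to0$. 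You instead prove lower semicontinuity directly: weak $L^2$ lower semicontinuity for the Dirichlet part (via $\chi_\ep\nabla\tilde\bu_\ep\rightharpoonup\nabla\bu$), strong $L^4$ convergence for the potential, and for the anchoring term a quadratic expansion about a smooth approximation $\bu_\delta$ in which the leading piece converges to $\int_\Omega(A\bu_\delta,\bu_\delta)\,dV$ by the computation (\ref{bdryconv})--(\ref{bdry1}) and assumption 5, while the cross and remainder pieces are killed by Lemma \ref{l2}: the $\ep$-weighted gradient term vanishes by the uniform $H^1$ bound and the $L^2$ term reduces to $\|\bu-\bu_\delta\|_{L^2}^2$ by Rellich, after which $\delta\to0$; superadditivity of $\liminf$ (all three pieces being bounded) finishes the argument, and the case $g<0$ is handled exactly by the factor $\ep$ in Lemma \ref{l2}, as you note. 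What the two approaches buy: yours never uses minimality of $\bu_\ep$ nor the correctors $\bu_i^\ep$, so it actually establishes the stronger $\Gamma$-liminf inequality for an arbitrary sequence with uniformly bounded extensions, and it is more elementary; the paper's argument is tied to minimizers (which suffices for Theorem \ref{t_main}) and has the advantage of not requiring one to identify the limit of the surface energy along general sequences, since everything is referred back to the recovery construction. The only caveat, shared with the paper's own passage to (\ref{greal}), is that testing the distributional convergence $A^\ep\to A$ against $w_jw_k$ requires the mild extra observation that these test functions need not be compactly supported, so your step is no worse off than the original.
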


\begin{proof}
  Suppose that there is $\left\{\bu_\delta\right\}\subset C^1(\Omega)$
  such that $\bu_\delta\to\bu$ strongly in $H^1(\Omega)$ and the
  extensions to $\Omega$ of minimizers $\bu_\ep$ of $\mathcal E_\ep$
  converge $\tilde\bu_\ep\rightharpoonup\bu$ weakly in $H^1(\Omega)$.
  We construct $\bu_\delta^\ep=\bu_\delta+\bz_\delta^\ep$ in the same
  way as in (\ref{testfn}), so that their extensions $\tilde
  \bu_\delta^\ep\rightharpoonup\bu_\delta$ converge weakly in
  $H^1(\Omega)$ along with $\mathcal
  E_\ep\left[\bu^\ep_\delta\right]\to\mathcal
  E\left[\bu_\delta\right]$ as $\ep\to 0$. Let
  $\tilde\zeta^\ep_\delta:=\tilde\bu_\ep-\tilde\bu_\delta^\ep$ and
  denote its restriction to $\Omega\backslash\cup_i\calP_i^\ep$ by
  $\zeta_\ep^\delta$. Then
  $\tilde\zeta_\ep^\delta\rightharpoonup\zeta_\delta:=\bu-\bu_\delta\,,$
  weakly in $H^1(\Omega)$ and strongly in $L^p(\Omega)$ for $p<6$ as
  $\ep\to 0$.

  1. {\em{Asymptotics of $\zeta_\ep^\delta$}}. First, we show that
  \begin{equation}
    \label{eq:zetaclaim}
    \lim_{\delta\to 0}\limsup_{\ep\to 0}\left\|\nabla\zeta_\ep^\delta\right\|_{L^2\left(\Omega\backslash\cup_i\calP_i^\ep\right)}=0\,.
  \end{equation}
  We begin by observing that the expression for $\mathcal
  E_\ep[\bu_\ep]$ can be rewritten so that
  \begin{equation}
    \label{eq:long}
    \begin{split}
      \mathcal E_\ep&[\bu_\ep]=\mathcal
      E_\ep[\bu_\ep^\delta]+\int_{\Omega\backslash\cup_i\calP_i^\ep}{\left|\nabla\zeta_\ep^\delta\right|}^2\,dV+2\int_{\Omega\backslash\cup_i\calP_i^\ep}\left(\nabla\zeta_\ep^\delta,\nabla\bu_\ep^\delta\right)\,dV \\
      &+\int_{\Omega\backslash\cup_i\calP_i^\ep}{\left|\zeta_\ep^\delta\right|}^4\,dV-2\int_{\Omega\backslash\cup_i\calP_i^\ep}{\left|\zeta_\ep^\delta\right|}^2\,dV-4\int_{\Omega\backslash\cup_i\calP_i^\ep}\left(\zeta_\ep^\delta,\bu_\ep^\delta\right)\,dV \\
      &+2\int_{\Omega\backslash\cup_i\calP_i^\ep}{\left|\zeta_\ep^\delta\right|}^2{\left|\bu_\ep^\delta\right|}^2\,dV+4\int_{\Omega\backslash\cup_i\calP_i^\ep}\left(\zeta_\ep^\delta,\bu_\ep^\delta\right)^2\,dV \\
      &+4\int_{\Omega\backslash\cup_i\calP_i^\ep}{\left|\bu_\ep^\delta\right|}^2\left(\zeta_\ep^\delta,\bu_\ep^\delta\right)\,dV+4\int_{\Omega\backslash\cup_i\calP_i^\ep}{\left|\zeta_\ep^\delta\right|}^2\left(\zeta_\ep^\delta,\bu_\ep^\delta\right)\,dV \\
      &+2\sum_ig_\ep\int_{\partial\calP_i^\ep}{\left(\zeta_\ep^\delta,\nu\right)}^2\,d\sigma+\sum_ig_\ep\int_{\partial\calP_i^\ep}\left(\zeta_\ep^\delta,\nu\right)\left(\bu_\ep^\delta,\nu\right)\,d\sigma\,.
    \end{split}
  \end{equation}
Since $\bu_\ep$ is a minimizer of $\mathcal E_\ep$, we have that
\[\mathcal E_\ep[\bu_\ep]\leq\mathcal E_\ep[\bu^\delta_\ep]\,,\]
then
  \begin{equation}
    \label{eq:longzeta}
    \begin{split}
      \int_{\Omega\backslash\cup_i\calP_i^\ep}&{\left|\nabla\zeta_\ep^\delta\right|}^2\,dV\leq -2\int_{\Omega\backslash\cup_i\calP_i^\ep}\left(\nabla\zeta_\ep^\delta,\nabla\bu_\ep^\delta\right)\,dV \\
      &+2\int_{\Omega\backslash\cup_i\calP_i^\ep}{\left|\zeta_\ep^\delta\right|}^2\,dV+4\int_{\Omega\backslash\cup_i\calP_i^\ep}\left(\zeta_\ep^\delta,\bu_\ep^\delta\right)\,dV \\
      &-4\int_{\Omega\backslash\cup_i\calP_i^\ep}{\left|\bu_\ep^\delta\right|}^2\left(\zeta_\ep^\delta,\bu_\ep^\delta\right)\,dV-4\int_{\Omega\backslash\cup_i\calP_i^\ep}{\left|\zeta_\ep^\delta\right|}^2\left(\zeta_\ep^\delta,\bu_\ep^\delta\right)\,dV \\
      &-2\sum_ig_\ep\int_{\partial\calP_i^\ep}{\left(\zeta_\ep^\delta,\nu\right)}^2\,d\sigma-\sum_ig_\ep\int_{\partial\calP_i^\ep}\left(\zeta_\ep^\delta,\nu\right)\left(\bu_\ep^\delta,\nu\right)\,d\sigma\,.
    \end{split}
  \end{equation}
  We need to estimate each term on the right hand side of
  (\ref{eq:longzeta}). In the remainder of the proof, $C>0$ denotes a
  constant independent of $\ep$ and $\delta$.

  (a). Beginning with the first term, we write
\begin{equation}
  \label{eq:1}
  \begin{split}
    \int_{\Omega\backslash\cup_i\calP_i^\ep}{\left(\nabla\bu^\ep_\delta,\nabla\zeta^\ep_\delta\right)}\,dV&=\int_{\Omega\backslash\cup_i\calP_i^\ep}{\left(\nabla\bu_\delta,\nabla\zeta^\ep_\delta\right)}\,dV \\
    &+\int_{\Omega\backslash\cup_i\calP_i^\ep}{\left(\nabla\bz^\ep_\delta,\nabla\zeta^\ep_\delta\right)}\,dV\,.
  \end{split}
\end{equation}
We have
\begin{equation}
  \label{eq:2}
  \int_{\Omega\backslash\cup_i\calP_i^\ep}{\left(\nabla\bu_\delta,\nabla\zeta^\ep_\delta\right)}\,dV=\int_{\Omega}{\left(\nabla\bu_\delta,\nabla\tilde\zeta^\ep_\delta\right)}\,dV-\int_{\cup_i\calP_i^\ep}{\left(\nabla\bu_\delta,\nabla\tilde\zeta^\ep_\delta\right)}\,dV\,.
\end{equation}
The second integral in (\ref{eq:2}) can be estimated with the help of
H\"older's and Minkowski's inequalities
\[
\begin{split}
  &\int_{\cup_i\calP_i^\ep}{\left(\nabla\bu_\delta,\nabla\tilde\zeta^\ep_\delta\right)}\,dV\leq{\left(\int_{\cup_i\calP_i^\ep}{\left|\nabla\bu_\delta\right|}^2\,dV\right)}^{1/2}{\left(\int_{\cup_i\calP_i^\ep}{\left|\nabla\tilde\zeta^\ep_\delta\right|}^2\,dV\right)}^{1/2} \\
  &\leq{\left(\int_{\cup_i\calP_i^\ep}{\left|\nabla\bu_\delta\right|}^2\,dV\right)}^{1/2}{\left(\int_\Omega\left[{\left|\nabla\tilde\bu^\ep_\delta\right|}^2+{\left|\nabla\tilde\bu_\ep\right|}^2\right]\,dV\right)}^{1/2} \\
  &\leq
  C{\left(\int_{\cup_i\calP_i^\ep}{\left|\nabla\bu_\delta\right|}^2\,dV\right)}^{1/2}\leq
  C{\left(\int_{\cup_i\calP_i^\ep}{\left|\nabla\bu_\delta-\nabla\bu+\nabla\bu\right|}^2\,dV\right)}^{1/2} \\
  &\leq C\left[{\left(\int_{\cup_i\calP_i^\ep}{\left|\nabla\bu\right|}^2\,dV\right)}^{1/2}+{\left(\int_{\cup_i\calP_i^\ep}{\left|\nabla\bu_\delta-\nabla\bu\right|}^2\,dV\right)}^{1/2}\right] \\
  &\leq C\left[{\left(\int_{\cup_i\calP_i^\ep}{\left|\nabla\bu\right|}^2\,dV\right)}^{1/2}+{\left(\int_{\Omega}{\left|\nabla\bu_\delta-\nabla\bu\right|}^2\,dV\right)}^{1/2}\right] \\
  &\to C{\left(\int_{\Omega}{\left|\nabla\bu_\delta-\nabla\bu\right|}^2\,dV\right)}^{1/2}\,,
\end{split}
\]
when $\ep\to 0$ because $\bu\in H^1(\Omega)$ and
$\left|\cup_i\calP_i^\ep\right|\to 0$. Here $C>0$ is independent of
$\delta$. Consider now the first integral in (\ref{eq:2}). By the weak
convergence of $\tilde\zeta^\ep_\delta$ to $\bu_\delta-\bu$ and
H\"older's inequality, we have that
\[
\begin{split}
  &\int_{\Omega}{\left(\nabla\bu_\delta,\nabla\tilde\zeta^\ep_\delta\right)}\,dV\to\int_{\Omega}{\left(\nabla\bu_\delta,\nabla(\bu_\delta-\bu)\right)}\,dV \\
  &\leq{\left(\int_\Omega{\left|\nabla\bu_\delta\right|}^2\right)}^{1/2}{\left(\int_\Omega{\left|\nabla\bu-\nabla\bu_\delta\right|}^2\right)}^{1/2} \\
  &\leq C\|\bu-\bu_\delta\|_{H^1(\Omega)}\,,
\end{split}
\]
when $\ep\to 0$. 

Now, for the second term in (\ref{eq:1}), we find
using (\ref{zest}) and the H\"older's inequality
\[
\begin{split}
\int_{\Omega\backslash\cup_i\calP_i^\ep}{\left(\nabla\bz^\ep_\delta,\nabla\zeta^\ep_\delta\right)}\,dV\leq{\left(\int_{\Omega\backslash\cup_i\calP_i^\ep}{\left|\nabla\bz^\ep_\delta\right|}^2\right)}^{1/2}{\left(\int_{\Omega\backslash\cup_i\calP_i^\ep}{\left|\nabla\zeta^\ep_\delta\right|}^2\right)}^{1/2}\to 0\,,
\end{split}
\]
when $\ep\to 0$.

(b). Consider the second term in (\ref{eq:longzeta}). Since
$\tilde\zeta_\ep^\delta$ converges weakly to $\bu-\bu_\delta$ in
$H^1(\Omega)$ and strongly in $L^p(\Omega)$ for $p<6$ when $\ep\to 0$,
we have that
\[
\begin{split}
  \int_{\Omega\backslash\cup_i\calP_i^\ep}&{\left|\zeta_\ep^\delta\right|}^2\,dV\leq\int_{\Omega}{\left|\tilde\zeta_\ep^\delta\right|}^2\,dV\to\int_\Omega{\left|\bu-\bu_\delta\right|}^2\,dV \\ 
  &\leq \|\bu-\bu_\delta\|_{H^1(\Omega)}\,,
\end{split}
\]
as $\ep\to 0$.

(c). Using H\"older's inequality, we find that
\[
\begin{split}
\left|\int_{\Omega\backslash\cup_i\calP_i^\ep}\left(\zeta_\ep^\delta,\bu_\ep^\delta\right)\,dV\right|\leq\left(\int_{\Omega}{\left|\tilde\bu_\ep^\delta\right|}^2\,dV\right)^{1/2}\left(\int_{\Omega}{\left|\tilde\zeta_\ep^\delta\right|}^2\,dV\right)^{1/2}\\
\to\left(\int_{\Omega}{\left|\bu_\delta\right|}^2\,dV\right)^{1/2}\left(\int_{\Omega}{\left|\bu-\bu_\delta\right|}^2\,dV\right)^{1/2}\leq C\|\bu-\bu_\delta\|_{H^1(\Omega)}\,,
\end{split}
\]
when $\ep\to 0$.

(d). Estimating in the same way as in (c), we obtain
\[
\begin{split}
  \left|\int_{\Omega\backslash\cup_i\calP_i^\ep}{\left|\bu_\ep^\delta\right|}^2\left(\zeta_\ep^\delta,\bu_\ep^\delta\right)\,dV\right|\leq\left(\int_{\Omega}{\left|\tilde\bu_\ep^\delta\right|}^4\,dV\right)^{3/4}\left(\int_{\Omega}{\left|\tilde\zeta_\ep^\delta\right|}^4\,dV\right)^{1/4}\\
  \to\left(\int_{\Omega}{\left|\bu_\delta\right|}^4\,dV\right)^{3/4}\left(\int_{\Omega}{\left|\bu-\bu_\delta\right|}^4\,dV\right)^{1/4}\leq
  C\|\bu-\bu_\delta\|_{H^1(\Omega)}\,,
\end{split}
\]
by Sobolev embedding.

(e). Estimating in the same way as in (c), we obtain
\[
\begin{split}
  \left|\int_{\Omega\backslash\cup_i\calP_i^\ep}{\left|\zeta_\ep^\delta\right|}^2\left(\zeta_\ep^\delta,\bu_\ep^\delta\right)\,dV\right|\leq\left(\int_{\Omega}{\left|\tilde\zeta_\ep^\delta\right|}^4\,dV\right)^{3/4}\left(\int_{\Omega}{\left|\tilde\bu_\ep^\delta\right|}^4\,dV\right)^{1/4}\\
  \to\left(\int_{\Omega}{\left|\bu_\delta\right|}^4\,dV\right)^{1/4}\left(\int_{\Omega}{\left|\bu-\bu_\delta\right|}^4\,dV\right)^{3/4}\leq
  C\|\bu-\bu_\delta\|^3_{H^1(\Omega)}\,,
\end{split}
\]
by Sobolev embedding.

(f). We use (\ref{bdrest3}) with $\lambda=1$ to obtain
\[
\begin{split}
  g_\ep&\sum_i\int_{\partial\calP_i^\ep}{\left(\zeta_\delta^\ep,\nu\right)}^2\,d\sigma \\
  & \leq C\left[\ep\int_{\omC}{\left|\nabla
        \zeta_\delta^\ep\right|}^2\,dV+\int_{\omC}{\left|\zeta_\delta^\ep\right|}^2\,dV\right] \\
  & \leq C\left[\ep\int_{\omC}{\left|\nabla
        \zeta_\delta^\ep\right|}^2\,dV+\int_{\Omega}{\left|\tilde\zeta_\delta^\ep\right|}^2\,dV\right] \\
  & \to C\int_{\Omega}{\left|\bu-\bu_\delta\right|}^2\,dV\leq
  C\|\bu-\bu_\delta\|_{H^1(\Omega)}\,,
\end{split}
\]
by the strong convergence of $\tilde\bu^\ep_\delta$ and
$\tilde\bu_\ep$ in $L^p(\Omega),\ 1<p<6,$ to $\bu_\delta$ and $\bu$,
respectively.

(g). Using the H\"older's inequality, we get
\[
\begin{split}
  &\left|\sum_ig_\ep\int_{\partial\calP_i^\ep}\left(\zeta_\ep^\delta,\nu\right)\left(\bu_\ep^\delta,\nu\right)\,d\sigma\right| \\
  &\leq\sum_i{\left(\int_{\partial\calP_i^\ep}\left|g_\ep\right|\left(\bu_\ep^\delta,\nu\right)^2\,dV\right)}^{1/2}{\left(\int_{\partial\calP_i^\ep}\left|g_\ep\right|\left(\zeta_\ep^\delta,\nu\right)^2\,dV\right)}^{1/2}\,.
\end{split}
\]
As in (f), applying (\ref{bdrest3}) with $\lambda=1$ we have that
\[\limsup_{\ep\to 0}\int_{\partial\calP_i^\ep}\left|g_\ep\right|\left(\bu_\ep^\delta,\nu\right)^2\,dV\leq  C\int_{\Omega}{\left|\bu_\delta\right|}^2\,dV\,,\]
then, with the help of (f) we obtain the estimate
\[
\begin{split}
  \limsup_{\ep\to 0}&\left|\sum_i g_\ep\int_{\partial\calP_i^\ep}\left(\zeta_\ep^\delta,\nu\right)\left(\bu_\ep^\delta,\nu\right)\,d\sigma\right| \\
  &\leq
  C{\left(\int_{\Omega}{\left|\bu_\delta\right|}^2\,dV\right)}^{1/2}{\left(\int_{\Omega}{\left|\bu-\bu_\delta\right|}^2\,dV\right)}^{1/2}\leq
  C\|\bu-\bu_\delta\|_{H^1(\Omega)}\,.
\end{split}
\]
Now, combining (a)-(f) and using (\ref{eq:longzeta}), leads to
\begin{equation}
  \label{eq:grzeta}
  \limsup_{\ep\to 0}\int_{\Omega\backslash\cup_i\calP_i^\ep}{\left|\nabla\zeta_\ep^\delta\right|}^2\,dV\leq C\|\bu-\bu_\delta\|_{H^1(\Omega)}\,,
\end{equation}
when $\delta$ is small. This, along with strong convergence of
$\bu_\delta$ to $\bu$ when $\delta\to 0$, proves the claim
(\ref{eq:zetaclaim}).

2. {\em{Limiting behavior of $\mathcal E_\ep[\bu_\ep]$}}. From (\ref{eq:long}) and (\ref{eq:longzeta}) we conclude that
  \begin{equation}
    \label{eq:longE}
    \begin{split}
      \mathcal E_\ep&[\bu_\ep]\geq \mathcal E_\ep[\bu_\ep^\delta]+2\int_{\Omega\backslash\cup_i\calP_i^\ep}\left(\nabla\zeta_\ep^\delta,\nabla\bu_\ep^\delta\right)\,dV \\
      &-2\int_{\Omega\backslash\cup_i\calP_i^\ep}{\left|\zeta_\ep^\delta\right|}^2\,dV-4\int_{\Omega\backslash\cup_i\calP_i^\ep}\left(\zeta_\ep^\delta,\bu_\ep^\delta\right)\,dV \\
      &+4\int_{\Omega\backslash\cup_i\calP_i^\ep}{\left|\bu_\ep^\delta\right|}^2\left(\zeta_\ep^\delta,\bu_\ep^\delta\right)\,dV+4\int_{\Omega\backslash\cup_i\calP_i^\ep}{\left|\zeta_\ep^\delta\right|}^2\left(\zeta_\ep^\delta,\bu_\ep^\delta\right)\,dV \\
      &+2\sum_ig_\ep\int_{\partial\calP_i^\ep}{\left(\zeta_\ep^\delta,\nu\right)}^2\,d\sigma+\sum_ig_\ep\int_{\partial\calP_i^\ep}\left(\zeta_\ep^\delta,\nu\right)\left(\bu_\ep^\delta,\nu\right)\,d\sigma \\
      &\geq\mathcal
      E_\ep[\bu_\ep^\delta]-\int_{\Omega\backslash\cup_i\calP_i^\ep}{\left|\nabla\zeta_\ep^\delta\right|}^2\,dV\,.
    \end{split}
  \end{equation}
Thus
\[
\begin{split}
  \liminf_{\ep\to 0}\mathcal E_\ep[\bu_\ep]\geq\lim_{\delta\to
    0}\liminf_{\ep\to 0}\mathcal E_\ep[\bu_\ep^\delta]-\lim_{\delta\to
    0}\limsup_{\ep\to
    0}\int_{\Omega\backslash\cup_i\calP_i^\ep}{\left|\nabla\zeta_\ep^\delta\right|}^2\,dV\,,
\end{split}
\]
and
\[\liminf_{\ep\to 0}\mathcal E_\ep[\bu_\ep]\geq \mathcal E[\bu]\,,\] 
because $\lim_{\ep\to 0}\mathcal E_\ep[\bu_\ep^\delta]=\mathcal
E[\bu_\delta]$ and $\mathcal E$ is continuous with respect to the
strong convergence of $\bu_\delta$ to $\bu$ in $H^1(\Omega)$.
\end{proof}

From (\ref{greal}) and (\ref{lowsem}) it follows that $\mathcal
E[\bu]\leq\mathcal E[\bw]$ for every $\bw\in H^1(\Omega)$, hence $\bu$
minimizes $\mathcal E$ over $H^1(\Omega)$.

\subsection{Magnetic Energy} Having established the asymptotics of the liquid crystalline component of the energy, we now turn our attention to magnetic interactions.  Consider (\ref{mag_energy}) for the prolate spheroidal particle $\calP$ with semiaxes $a>b$ and long axis oriented in the direction of $z$-axis. It is well known \cite{Landau-Lifshitz-ECM} that the solution to this problem in the exterior of $\calP$ is given by \begin{equation}
  \label{eq:magpot}
  \phi=\frac{4\pi a\,b^2m}{{\left(a^2-b^2\right)}^{3/2}}\left(\tanh^{-1}(t)-t\right)z\,,
\end{equation}
in cylindrical coordinates $(\rho,\theta,z),$ where
$t=\xi^{-1/2}{\left(a^2-b^2\right)}^{1/2}$ and $\xi$ is the largest
root of
\[\frac{\rho^2}{\xi+b^2-a^2}+\frac{z^2}{\xi}=1\,.\]
Further, $m$ is the density of the magnetic moment, so that
$\bm=\frac{4\pi a\,b^2 m}{3}\hat{\bf z}$ and $\hat{\bf z}$ is a unit
vector in the direction of $z$-axis. Assuming that
$a\ll{\left(\rho^2+z^2\right)}^{1/2}$ and expanding in
$a/{\left(\rho^2+z^2\right)}^{1/2}$, we find that
\begin{equation}
  \label{eq:exp_phi}\,
  \phi=\frac{32\pi a\,b^2m}{3r^3}z+O\left(z{(a/r)}^5\right),
\end{equation}
where $r=\sqrt{\rho^2+z^2}=|\bx|$. Note that the leading term in
(\ref{eq:exp_phi}) is identical to that for a sphere of the same
volume as $\calP$ and centered at the origin \cite{BorceaBruno}. The
leading order term in the expansion of the magnetic filed $\bf H$
generated by the ferromagnetic particle $\calP$ is given by
\[{\bf H}(\bx)=\frac{32\pi
  a\,b^2m}{3r^3}\left(\frac{3z}{r^2}{\bx}-\hat{\bf
    z}\right)+O\left((a/r)^5\right)\,,\] then
\begin{equation}
  \label{eq:exp_H}
  |{\bf H}(\bx)|=O\left(m(a/r)^3\right)\,,
\end{equation}
when $a/r\ll 1$.

Now consider the term corresponding to the magnetic interaction between the particles $\calP_i^\ep$ and $\calP_j^\ep$ for some $i,j=1\ldots,N_\ep$. We have
\begin{equation}
  \label{eq:twopart}
  \begin{split}
    \int_{\calP_i^\ep}&({\bf H}_j^\ep,\bm_i^\ep)\,dV+\int_{\calP_j^\ep}({\bf H}_i^\ep,\bm_j^\ep)\,dV=O\left(\frac{\left|m_i^\ep\right|\left|m_j^\ep\right|{\mathrm {Vol}}(\calP_i^\ep){\mathrm {Vol}}(\calP_j^\ep)}{d^3}\right) \\
    &=O\left(\ep^{6\alpha+2\beta_1-3}\right)\,,
\end{split}
\end{equation}
then
\begin{equation}
  \label{eq:mag_int_asym}
  \int_{\mathbb R^3}\left({\bf m_\ep,H_\ep}\right)\,dV=O\left(N^2_\ep\ep^{6\alpha+2\beta_1-3}\right)=O\left(\ep^{6\alpha+2\beta_1-9}\right)\to 0\,,
\end{equation}
when $\ep\to 0$ by our assumptions $\alpha$ and $\beta_1$. 

Finally, we consider the interaction between the external magnetic field and ferromagnetic particles. We have
\begin{equation}
  \label{eq:ext_magn}
  \begin{split}
  \int_{\mathbb R^3}&({\bf m}_\ep,{\bf h}_\ep)\,dV=\sum_i\int_{\calP_i^\ep}({\bf m}_i^\ep,{\bf h}_\ep)\,dV \\
  &=\int_\Omega\left({\bf h},{\bf M}^\ep\right)\,dV\to\int_\Omega\left({\bf h},{\bf M}\right)\,dV\,, \end{split} \end{equation}
by (\ref{asspars}) and (\ref{asstensor}), where ${\bf M}$ is the effective magnetic moment density. 

Combining the results for the liquid crystal and magnetic energies, we conclude that the minimizers of the family of functionals $\mathcal F_\ep$ converge to a minimizer of the functional 
\begin{equation}
\label{eq:limmagfun1}
  \mathcal F_0[\bu]=\int_{\Omega}\left[{\left|\nabla\bu\right|}^2+{\left(1-\left|\bu\right|^2\right)}^2+\left(A\bu,\bu\right)-2({\bf h},{\bf M})\right]\,dV,
\end{equation}
concluding the proof of Theorem \ref{t_main}.

\begin{remark}
 {\em  Suppose that the particles are distributed periodically in $\Omega$ with their centers of mass positioned at the vortices of a cubic lattice with the side $\epsilon$. If we assume that there exists a continuous function $R:\Omega\to {\mathrm{Orth}}^+:=\left\{X\in M^{3\times 3}:XX^T=I,\ \det{X}=1\right\}$ such that $R_i^\ep=R\left(\bx_i^\ep\right)$ for every $i=1\ldots N_\ep$ and $\ep>0$, then
\begin{equation}
  \label{eq:extereme_M}
  {\bf M}(\bx)=m\,R(\bx)\hat{\bf z}, 
\end{equation}
and
\begin{eqnarray}
  \label{eq:extereme_A}
  & A(\bx)=g\,R(\bx)\left(\int_{\partial\calP}\nu\otimes\nu d\sigma\right)R^T(\bx) & \nonumber \\ & =g\,R(\bx)(\lambda_1(\hat{\bf z}\otimes\hat{\bf z})+\lambda_2\left(I-\hat{\bf z}\otimes\hat{\bf z}\right)))R^T(\bx) & \\ & =\frac{g}{m^2}(\lambda_1({\bf M(x)}\otimes{\bf M(x)})+\lambda_2\left(I-{\bf M(x)}\otimes{\bf M(x)}\right)))\,, \nonumber &
\end{eqnarray}
where $\lambda_1$ and $\lambda_2$ are the two distinct eigenvalues of $\int_{\partial\calP}\nu\otimes\nu d\sigma$. The coupling terms in (\ref{eq:limmagfun1}) then take the form
 \begin{equation}
   \label{eq:density}
   \frac{g(\lambda_1-\lambda_2)}{m^2}\left({\bf M},\bu\right)^2+\frac{g\lambda_2}{m^2}{|\bu|}^2-2({\bf h},{\bf M}).
 \end{equation}
 For a needle-like prolate spheroid with a high aspect ratio we have that $\lambda_1\ll\lambda_2$ and the coefficient $\Lambda:=\frac{g(\lambda_1-\lambda_2)}{m^2}$ in front of $\left({\bf M},\bu\right)^2$ has a sign opposite to that of $g$. Hence nematic molecules align perpendicular to $\bf M$ when $\Lambda>0$ and parallel to $\bf M$ when $\Lambda<0$. Since the model in \cite{burylov-reiker95} assumes that $|\bu|=1$, the middle term in (\ref{eq:density}) can be neglected and the remaining interaction terms in (\ref{eq:density}) coincide with those in (\ref{burylov-energy}) up to a difference in notation.}  \end{remark}


\bibliography{ferro_lc}

\end{document}